\documentclass[a4paper, 12pt, twoside, onecolumn]{article}
\usepackage{amsmath,amssymb,amsthm,mathrsfs,hyperref}
\usepackage[all]{xy}
\usepackage[top=1in, bottom=30mm, left=30mm, right=1in]{geometry}

\DeclareMathOperator{\ann}{ann}
\DeclareMathOperator{\Ass}{Ass}
\DeclareMathOperator{\cx}{cx}
\DeclareMathOperator{\gr}{gr}
\DeclareMathOperator{\Image}{Image}
\DeclareMathOperator{\Ext}{Ext}
\DeclareMathOperator{\Tor}{Tor}
\DeclareMathOperator{\Hom}{Hom}
\DeclareMathOperator{\projdim}{projdim}
\DeclareMathOperator{\Spec}{Spec}
\DeclareMathOperator{\Proj}{Proj} 
\DeclareMathOperator{\Var}{Var}

\theoremstyle{plain}
\newtheorem{theorem}{Theorem}[section]
\newtheorem{lemma}[theorem]{Lemma}

\newtheorem{corollary}[theorem]{Corollary}

\theoremstyle{definition}
\newtheorem{hypothesis}[theorem]{Hypothesis}
\newtheorem{para}[theorem]{}

\theoremstyle{remark}

\numberwithin{equation}{theorem}

\title{Asymptotic prime divisors over complete intersection rings}
\author{Dipankar Ghosh\\Tony J. Puthenpurakal}
\date{27th March, 2014}

\begin{document}

\maketitle

\begin{abstract}
\noindent
 Let $A$ be a local complete intersection ring. Let $M,N$ be two finitely generated $A$-modules and $I$ an ideal of $A$.
 We prove that
 \[
  \bigcup_{i\geqslant 0}\bigcup_{n \geqslant 0}\Ass_A\left(\Ext_A^i(M,N/I^n N)\right)
 \]
 is a finite set. Moreover, we prove that there exist $i_0,n_0\geqslant 0$ such that for all $i\geqslant i_0$ and $n \geqslant n_0$,
 we have
 \begin{align*}
  \Ass_A\left(\Ext_A^{2i}(M,N/I^nN)\right) &= \Ass_A\left(\Ext_A^{2 i_0}(M,N/I^{n_0}N)\right), \\
  \Ass_A\left(\Ext_A^{2i+1}(M,N/I^nN)\right) &= \Ass_A\left(\Ext_A^{2 i_0 + 1}(M,N/I^{n_0}N)\right).
 \end{align*}
 We also prove the analogous results for complete intersection rings which arise in algebraic geometry. Further, we
 prove that the complexity $\cx_A(M,N/I^nN)$ is constant for all sufficiently large $n$.
\end{abstract}

\footnotetext[1]{{\it 2010 Mathematics Subject Classification.} Primary 13H10, 13D07; Secondary 13A02, 13A15.}

\footnotetext[2]{{\it Key words and phrases.} Complete intersection rings; Graded rings and modules; Associate primes;
 Ext and Tor; Eisenbud operators; Support variety.}
 
\section{Introduction}\label{Introduction}

 Let $A$ be a commutative Noetherian ring, $I$ an ideal of $A$, and $M$ a finitely generated $A$-module.
 Brodmann \cite{Br79} proved that the set of associated prime ideals $\Ass_A(M/I^n M)$ is independent of $n$ for all
 sufficiently large $n$. Thereafter, L. Melkersson and P. Schenzel generalized Brodmann's result in
 \cite[Theorem 1]{Me93} by proving that
 \[\Ass_A\left(\Tor_i^A(M,A/I^n)\right)\]
 is independent of $n$ for all large $n$ and for a fixed $i\geqslant 0$.
 
 Later, D. Katz and E. West proved the above result in a more general way \cite[3.5]{Ka04}; if $N$ is a finitely
 generated $A$-module, then for a fixed $i\geqslant 0$, the sets
 \[
  \Ass_A\left(\Tor_i^A(M,N/I^n N)\right)\quad\mbox{and}\quad \Ass_A\left(\Ext_A^i(M,N/I^n N)\right)
 \]
 are stable for all large $n$. So, in particular, for a fixed $i\geqslant 0$,
 \[
  \bigcup_{n\geqslant 0}\Ass_A\left(\Ext_A^i(M,N/I^n N)\right)
 \]
 is a finite set. In this context, the second author \cite[page 368]{Pu13} raised a question about what happens if we
 vary $i$ ($\geqslant 0$) also? More precisely,
 \begin{center}
  ($\dagger$)\hfill is the set ~$\displaystyle\bigcup_{i\geqslant 0}\bigcup_{n\geqslant 0}\Ass_A\left(\Ext_A^i(M,N/I^n N)\right)$
  finite?\hfill \;
 \end{center}
 The motivation for the question ($\dagger$) came from the following two questions. They were raised by
 W. Vasconcelos \cite[3.5]{Va98} and Melkersson and Schenzel \cite[page 936]{Me93} respectively.
 \begin{center}
  (1)\hfill Is the set ~$\displaystyle\bigcup_{i\geqslant 0}\Ass_A\left(\Ext_A^i(M,A)\right)$ finite?\hfill \; \\
  (2)\hfill Is the set ~$\displaystyle\bigcup_{i\geqslant 0}\bigcup_{n\geqslant 1}\Ass_A\left(\Tor_i^A(M,A/I^n)\right)$ finite?\hfill\;
 \end{center}

 Recently, the second author \cite[Theorem~5.1]{Pu13} proved that if $A$ is a local complete intersection
 ring and $\mathcal{N}=\bigoplus_{n\geqslant 0}N_n$ is a finitely generated graded module over the Rees ring $\mathscr{R}(I)$,
 then
 \[ \bigcup_{i\geqslant 0} \bigcup_{n\geqslant 0} \Ass_A \left( \Ext_A^i(M,N_n) \right) \]
 is a finite set. Moreover, he proved that there exist $i_0, n_0 \geqslant 0$ such that
 \begin{align*}
  \Ass_A\left(\Ext_A^{2i}(M,N_n)\right) &= \Ass_A\left(\Ext_A^{2 i_0}(M,N_{n_0})\right), \\
  \Ass_A\left(\Ext_A^{2i+1}(M,N_n)\right) &= \Ass_A\left(\Ext_A^{2 i_0 + 1}(M,N_{n_0})\right)
 \end{align*}
 for all $i\geqslant i_0$ and $n \geqslant n_0$. In particular, if $N$ is a finitely generated $A$-module, then $\mathcal{N}$ can be
 taken as $\bigoplus_{n\geqslant 0}(I^n N)$ or $\bigoplus_{n\geqslant 0}(I^n N/I^{n+1} N)$.
 In the present study, we prove that the question $(\dagger)$ has an affirmative answer for a local
 complete intersection ring. We also analyze the stability of the sets of associated prime ideals which occurs
 periodically after a certain stage.

 Let $(A,\mathfrak{m})$ be a local complete intersection ring. Let $M,N$ be two finitely generated $A$-modules and
 $I$ an ideal of $A$. The complexity of the pair $(M,N)$ is defined to be the number
 \[
  \cx_A(M,N) = \inf\left\{b\in\mathbb{N} ~\middle|~ \limsup_{n\rightarrow\infty}\dfrac{\mu(\Ext_A^n(M,N))}{n^{b-1}}<\infty \right\},
 \]
 where $\mu(D)$ denotes the minimal number of generators of a finitely generated $A$-module $D$. In \cite[Theorem 7.1]{Pu13}, the
 second author proved that $\cx_A(M,I^j N)$ is constant for all $j \gg 0$. In Section~\ref{Support varieties}, we prove that
 \begin{center}
  $(\dagger\dagger)$\hfill $\cx_A(M,N/I^jN)$\quad is constant for all $j\gg 0$. \hfill \;
 \end{center}

 The rest of the paper is organized as follows. In Section~\ref{Module structure}, we give some graded module structures which we use
 in order to prove our main results. The finiteness results on asymptotic primes are proved in
 Section~\ref{Asymptotic associated primes: Finiteness}; while the stability results are shown in
 Section~\ref{Asymptotic associated primes: Stability}. Then, in Section~\ref{Asymptotic associated primes: The geometric case}, we
 prove the analogous results on associated primes for complete intersection rings which arise in algebraic geometry. Finally, in
 Section~\ref{Support varieties}, we prove $(\dagger\dagger)$.
 
\section{Module structure}\label{Module structure}
 In this section, we give the graded module structures which we are going to use in order to prove our main results.
 
 Let $Q$ be a commutative Noetherian ring and ${\bf f} = f_1,\ldots,f_c$ a $Q$-regular sequence. Set $A := Q/({\bf f})$. Let $M$
 and $D$ be two $A$-modules, and let $M$ be finitely generated.
 
 \begin{para}\label{para:module structure 1}
 Let $\mathbb{F}:\;\cdots\rightarrow F_n\rightarrow\cdots\rightarrow F_1\rightarrow F_0\rightarrow 0$ be a projective resolution of
 $M$ by finitely generated free $A$-modules. Let $t_j:\mathbb{F}(+2)\to\mathbb{F}$, $1\leqslant j\leqslant c$ be the Eisenbud operators defined
 by ${\bf f} = f_1,\ldots,f_c$ (see \cite[Section~1]{Ei80}). By \cite[1.4]{Ei80}, the maps $t_j$ are determined uniquely up to
 homotopy. In particular, they induce well-defined maps
 \[ t_j : \Ext_A^i(M,D) \longrightarrow \Ext_A^{i+2}(M,D) \quad \mbox{for all $i$ and $1 \leqslant j \leqslant c$}. \]
 By \cite[1.5]{Ei80}, the maps $t_j$ ($j=1,\ldots,c$) commute up to homotopy. Thus
 \[\Ext_A^{\star}(M,D) := \bigoplus_{i\geqslant 0}\Ext_A^i(M,D)\]
 turns into a graded $\mathscr{T}:=A[t_1,\ldots,t_c]$-module, where $\mathscr{T}$ is the graded polynomial ring  over 
 $A$ in the cohomology operators $t_j$ defined by ${\bf f}$ with $\deg(t_j)=2$ for all $1\leqslant j\leqslant c$. Furthermore,
 these structures depend only on ${\bf f}$, are natural in both module arguments and commute with the connecting
 maps induced by short exact sequences.
 \end{para}
 
 \begin{para}\label{para:Gulliksen}
 If $\projdim_Q(M)$ is finite and $D$ is
 a finitely generated $A$-module, then Gulliksen \cite[3.1]{Gu74} proved that $\Ext_A^{\star}(M,D)$ is a finitely
 generated graded $\mathscr{T}$-module.
 \end{para}
 
 \begin{para}\label{para:module structure 2}
 Let $I$ be an ideal of $A$. Let $\mathscr{R}(I) = \bigoplus_{n \geqslant 0} I^n X^n$ be the Rees ring associated to $I$. We consider
 $\mathscr{R}(I)$ as a subring of the polynomial ring $A[X]$. Let $\mathcal{N} = \bigoplus_{n \geqslant 0} N_n$ be a graded
 $\mathscr{R}(I)$-module. Let $u\in\mathscr{R}(I)$ be a homogeneous element of degree $s$. Consider the $A$-linear maps given by
 multiplication with $u$:
 \[ N_n \stackrel{u}{\longrightarrow} N_{n+s} \quad \mbox{for all }n. \]
 By applying $\Hom_A(\mathbb{F},-)$ on the above maps and using the naturality of the operators $t_j$, we have the following
 commutative diagram of complexes:
 \[
  \xymatrixrowsep{6mm} \xymatrixcolsep{12mm}
   \xymatrix{
   \Hom_A(\mathbb{F},N_n) \ar[d]^{u} \ar[r]^{t_j} &\Hom_A(\mathbb{F}(+2),N_n) \ar[d]^{u} \\
   \Hom_A(\mathbb{F},N_{n+s}) \ar[r]^{t_j} &\Hom_A(\mathbb{F}(+2),N_{n+s}).
   }
 \]
 Now, taking cohomology, we obtain the following commutative diagram of $A$-modules:
 \[
  \xymatrixrowsep{6mm} \xymatrixcolsep{12mm}
   \xymatrix{
   \Ext_A^i(M,N_n) \ar[d]^{u} \ar[r]^{t_j} & \Ext_A^{i+2}(M,N_n) \ar[d]^{u} \\
   \Ext_A^i(M,N_{n+s}) \ar[r]^{t_j} & \Ext_A^{i+2}(M,N_{n+s})
   }
 \]
 for all $i, n$ and $1 \leqslant j \leqslant c$. Thus
 \[
   \mathscr{E}(\mathcal{N}) := \bigoplus_{i\geqslant 0}\bigoplus_{n\geqslant 0}\Ext_A^i(M,N_n)
 \]
 turns into a bigraded $\mathscr{S} := \mathscr{R}(I)[t_1,\ldots,t_c]$-module, where we set $\deg(t_j) = (0,2)$ for all
 $1 \leqslant j \leqslant c$ and $\deg(u X^s) = (s,0)$ for all $u \in I^s$, $s \geqslant 0$.
 \end{para}
 
 \begin{para}\label{para:module structure 3}
 Suppose $N$ is a finitely generated $A$-module. Set $\mathcal{L} := \bigoplus_{n \geqslant 0}(N/I^{n+1}N)$. Note that
 $\mathscr{R}(I,N) = \bigoplus_{n \geqslant 0} I^n N$ and $N[X] = N \otimes_A A[X]$ are graded modules over $\mathscr{R}(I)$
 and $A[X]$ respectively. Since $\mathscr{R}(I)$ is a graded subring of $A[X]$, we set that $N[X]$ is a graded
 $\mathscr{R}(I)$-module. Therefore $\mathcal{L}$ is a graded $\mathscr{R}(I)$-module, where the graded structure is
 induced by the sequence
 \[ 0 \longrightarrow \mathscr{R}(I,N) \longrightarrow N[X] \longrightarrow \mathcal{L}(-1) \longrightarrow 0. \]
 Therefore, by the observations made in Section~\ref{para:module structure 2}, 
 \[ \mathscr{E}(\mathcal{L}) = \bigoplus_{i \geqslant 0} \bigoplus_{n \geqslant 0} \Ext_A^i(M,N/I^{n+1}N) \]
 is a bigraded module over $\mathscr{S}=\mathscr{R}(I)[t_1,\ldots,t_c]$.
 \end{para}
 
 Let us recall the following result from \cite[1.1]{Pu13}.
 
\begin{theorem}\label{theorem:finitely generated}
 Let $Q$ be a Noetherian ring of finite Krull dimension, and let ${\bf f} = f_1,\ldots,f_c$ be a $Q$-regular sequence. Set $A :=
 Q/({\bf f})$. Let $M$ be a finitely generated $A$-module with $\projdim_Q(M)$ finite. Let $I$ be an ideal of $A$, and let
 $\mathcal{N} = \bigoplus_{n \geqslant 0} N_n$ be a finitely generated $\mathscr{R}(I)$-module. Then 
 \[ \mathscr{E}(\mathcal{N}) := \bigoplus_{i \geqslant 0} \bigoplus_{n \geqslant 0} \Ext_A^i(M,N_n) \]
 is a finitely generated bigraded $\mathscr{S}=\mathscr{R}(I)[t_1,\ldots,t_c]$-module.
\end{theorem}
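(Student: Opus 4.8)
The plan is to build, from a finite free resolution of $M$ over $Q$, the standard (Shamash--Eisenbud) resolution of $M$ over $A$, and then to read off the finite generation of $\mathscr{E}(\mathcal{N})$ from the spectral sequence of the ``auxiliary-variable'' filtration, whose first page involves only the finitely many nonzero groups $\Ext_Q^i(M,\mathcal{N})$. Since $p:=\projdim_Q(M)$ is finite and $Q$ is Noetherian, we may fix a resolution $\mathbb{G}\colon 0\to G_p\to\cdots\to G_1\to G_0\to 0$ of $M$ by finitely generated free $Q$-modules. As $\mathbf{f}$ annihilates $M$, the standard construction (see \cite[\S1]{Ei80}) produces an $A$-free resolution $\mathbb{F}$ of $M$ together with the cohomology operators $t_1,\dots,t_c$ of Section~\ref{Module structure}, in which the term in homological degree $n$ is a finite direct sum $F_n=\bigoplus_{i+2j=n}(G_i\otimes_Q A)^{\oplus b_j}$ assembled from only finitely many of the $G_i$ (the multiplicities $b_j$ grow polynomially in $j$ of degree $c-1$), and in which $t_j$ is, up to homotopy, multiplication by the $j$-th auxiliary variable $y_j$. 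In particular $\mathbb{F}$ carries an additional $\mathbb{N}$-grading, by $y$-degree $j$, which the differential of $\mathbb{F}$ never increases.

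Apply $\Hom_A(-,\mathcal{N})$ and put $C^{\bullet}:=\Hom_A(\mathbb{F},\mathcal{N})$, so that $H^i(C^{\bullet})=\bigoplus_{n\ge 0}\Ext_A^i(M,N_n)$ and the operators $t_j$ act on $C^{\bullet}$. Because $\mathcal{N}$ is an $A$-module, $\Hom_A(G_i\otimes_Q A,\mathcal{N})=\Hom_Q(G_i,\mathcal{N})\cong\mathcal{N}^{\operatorname{rank}G_i}$ is a finitely generated graded $\mathscr{R}(I)$-module; hence each $C^n$ is a finitely generated graded $\mathscr{R}(I)$-module, the cochain differential and the $t_j$ are $\mathscr{R}(I)$-linear (by the naturality recorded in Section~\ref{Module structure}), and consequently each $H^i(C^{\bullet})$ is finitely generated over the Noetherian ring $\mathscr{R}(I)$. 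Filter $C^{\bullet}$ by $y$-degree; since the cochain differential does not decrease $y$-degree, this is a decreasing filtration by $\mathscr{R}(I)$-linear subcomplexes which is finite on each $C^n$ (from $i=n-2j\ge 0$ one gets $0\le j\le n/2$), and its associated graded complex is $\Hom_Q(\mathbb{G},\mathcal{N})\otimes_Q Q[y_1,\dots,y_c]$ equipped with the plain $\Hom_Q(\mathbb{G},\mathcal{N})$-differential.

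Consequently the spectral sequence of this filtration has
\[
E_1=\Big(\bigoplus_{i\ge 0}\Ext_Q^i(M,\mathcal{N})\Big)\otimes_Q Q[y_1,\dots,y_c]=\bigoplus_{0\le i\le p}\Ext_Q^i(M,\mathcal{N})\otimes_Q Q[y_1,\dots,y_c],
\]
the last equality holding because $\Ext_Q^i(M,-)=0$ for $i>p=\projdim_Q(M)$. Now $\Ext_Q^i(M,\mathcal{N})=H^i\!\big(\Hom_Q(\mathbb{G},\mathcal{N})\big)$ is a subquotient of the finitely generated $\mathscr{R}(I)$-module $\Hom_Q(G_i,\mathcal{N})$, hence is itself finitely generated over $\mathscr{R}(I)$; therefore $E_1$ is a finitely generated module over the Noetherian ring $\mathscr{R}(I)[y_1,\dots,y_c]$, and so is its subquotient $E_\infty$. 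Since the filtration of each $H^n(C^{\bullet})$ is finite, the spectral sequence converges and $E_\infty=\gr\,\mathscr{E}(\mathcal{N})$ for the induced filtration. Matching the gradings so that $t_j$ has filtered degree one and acts on $\gr\,\mathscr{E}(\mathcal{N})$ as multiplication by $y_j$, we conclude that $\gr\,\mathscr{E}(\mathcal{N})$ is a finitely generated $\mathscr{R}(I)[y_1,\dots,y_c]$-module; lifting a finite homogeneous generating set back to $\mathscr{E}(\mathcal{N})$ then yields a finite generating set for $\mathscr{E}(\mathcal{N})$ over $\mathscr{S}=\mathscr{R}(I)[t_1,\dots,t_c]$, the successive-approximation argument terminating precisely because only finitely many filtration steps of each $\bigoplus_n\Ext_A^n(M,N_n)$ are nonzero.

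The step I expect to demand the most care is the compatibility claim just used: that the $y$-degree filtration is compatible with the cohomology-operator action in the strict sense that $t_j$ raises the filtration by exactly one with leading term equal to multiplication by $y_j$. Establishing this amounts to unwinding the standard construction together with the identification, in \cite[\S1]{Ei80}, of the operators defined there by lifting $d^2$ over $Q$ with the operators visible on the standard resolution. An alternative route, which trades this bookkeeping for that of a change-of-rings long exact sequence, is Gulliksen's induction on $c$: writing $A=A'/(f_c)$ with $A'=Q/(f_1,\dots,f_{c-1})$, one compares $\mathscr{E}(\mathcal{N})$ with its analogue over $A'$ through the operator $t_c$ and the attendant exact sequence, and then concludes by graded Nakayama, much as in \cite[3.1]{Gu74}.
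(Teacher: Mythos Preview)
The paper does not prove this theorem at all: it merely recalls it from \cite[1.1]{Pu13} (``Let us recall the following result from \cite[1.1]{Pu13}''), so there is no in-paper proof to compare your proposal against.

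That said, your sketch is a legitimate and essentially correct route to the result. The spectral sequence of the $y$-degree filtration on the Shamash--Eisenbud resolution has $E_1=\bigoplus_{0\le i\le p}\Ext_Q^i(M,\mathcal{N})\otimes_Q Q[y_1,\dots,y_c]$, which is finitely generated over the Noetherian ring $\mathscr{R}(I)[y_1,\dots,y_c]$; since the $i$-index is bounded by $p$, the spectral sequence degenerates after finitely many pages, so $E_\infty$ is a genuine subquotient of $E_1$ as an $\mathscr{R}(I)[y_1,\dots,y_c]$-module and hence finitely generated; the lifting argument then goes through. You have correctly isolated the one technical point that needs care---that each $t_j$ raises the $y$-filtration by exactly one with associated graded equal to multiplication by $y_j$---and this is precisely the content of Eisenbud's identification of the cohomology operators with the divided-power variables on the standard resolution. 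The alternative you mention at the end, Gulliksen's induction on $c$ via the change-of-rings exact sequence and graded Nakayama, is in fact the approach taken in \cite{Pu13} (adapting \cite[3.1]{Gu74} to the bigraded $\mathscr{R}(I)[t_1,\dots,t_c]$ setting), so your main argument is a genuinely different proof from the one the paper is citing, while your ``alternative route'' is essentially the cited one.
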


We first prove the main results of this article for a ring $A$ which is of the form $Q/({\bf f})$, where $Q$ is a regular local ring
and ${\bf f} = f_1,\ldots,f_c$ is a $Q$-regular sequence. Then we deduce the main results for a local complete intersection ring with
the help of the following well-known lemma:

\begin{lemma}\label{lemma:associated}
 Let $(A,\mathfrak{m})$ be a Noetherian local ring and $\widehat{A}$ the completion of $A$ with respect to
 $\mathfrak{m}$. Let $D$ be an $A$-module. Then
 \[\Ass_A(D) = \{\mathfrak{q}\cap A : \mathfrak{q} \in \Ass_{\widehat{A}}(D \otimes_A \widehat{A})\}.\]
\end{lemma}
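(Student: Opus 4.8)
The plan is to derive both inclusions from the behaviour of associated primes under the faithfully flat base change $A \to \widehat{A}$. Throughout one uses that $\widehat{A}$ is Noetherian, that $A \to \widehat{A}$ is flat, and---crucially for the non-emptiness assertions---that it is faithfully flat. Note also the elementary facts that every element of an associated prime of a module is a zerodivisor on that module, and that a flat ring map sends nonzerodivisors to nonzerodivisors.

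I would first prove $\Ass_A(D) \subseteq \{\mathfrak{q}\cap A : \mathfrak{q} \in \Ass_{\widehat{A}}(D \otimes_A \widehat{A})\}$. Given $\mathfrak{p} \in \Ass_A(D)$, fix an embedding $A/\mathfrak{p} \hookrightarrow D$; applying the exact functor $-\otimes_A \widehat{A}$ yields an embedding $\widehat{A}/\mathfrak{p}\widehat{A} \hookrightarrow D \otimes_A \widehat{A}$ of $\widehat{A}$-modules, so $\Ass_{\widehat{A}}(\widehat{A}/\mathfrak{p}\widehat{A}) \subseteq \Ass_{\widehat{A}}(D \otimes_A \widehat{A})$. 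By faithful flatness $\widehat{A}/\mathfrak{p}\widehat{A} \neq 0$, hence, being a module over the Noetherian ring $\widehat{A}$, it has an associated prime $\mathfrak{q}$. It remains to check $\mathfrak{q}\cap A = \mathfrak{p}$: the containment $\mathfrak{p} \subseteq \mathfrak{q}\cap A$ is clear since $\mathfrak{p}\widehat{A} \subseteq \mathfrak{q}$; conversely, any $x \in A\setminus\mathfrak{p}$ is a nonzerodivisor on $A/\mathfrak{p}$, hence---by flatness of $A/\mathfrak{p} \to \widehat{A}/\mathfrak{p}\widehat{A}$---a nonzerodivisor on $\widehat{A}/\mathfrak{p}\widehat{A}$, so $x$ cannot lie in the associated prime $\mathfrak{q}$, giving $\mathfrak{q}\cap A \subseteq \mathfrak{p}$. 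Thus $\mathfrak{p} = \mathfrak{q}\cap A$ with $\mathfrak{q} \in \Ass_{\widehat{A}}(D \otimes_A \widehat{A})$, as required.

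For the reverse inclusion, take $\mathfrak{q} \in \Ass_{\widehat{A}}(D \otimes_A \widehat{A})$ and set $\mathfrak{p} := \mathfrak{q}\cap A$; I must show $\mathfrak{p} \in \Ass_A(D)$. Here the key input is the flat base change formula for associated primes (see, e.g., Matsumura, \emph{Commutative Ring Theory}, or Bourbaki, \emph{Alg\`ebre commutative}, Ch.~IV):
\[
 \Ass_{\widehat{A}}(D \otimes_A \widehat{A}) = \bigcup_{\mathfrak{p}' \in \Ass_A(D)} \Ass_{\widehat{A}}\bigl(\widehat{A}/\mathfrak{p}'\widehat{A}\bigr).
\]
By this, $\mathfrak{q} \in \Ass_{\widehat{A}}(\widehat{A}/\mathfrak{p}'\widehat{A})$ for some $\mathfrak{p}' \in \Ass_A(D)$, and the nonzerodivisor argument of the previous paragraph, applied to $\mathfrak{p}'$, shows $\mathfrak{q}\cap A = \mathfrak{p}'$. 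Hence $\mathfrak{p} = \mathfrak{p}' \in \Ass_A(D)$; in fact the map $\mathfrak{q}\mapsto \mathfrak{q}\cap A$ carries $\Ass_{\widehat{A}}(D \otimes_A \widehat{A})$ onto $\Ass_A(D)$.

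The only genuine subtlety I anticipate is the invocation of the base change formula when $D$ is not finitely generated, which is where one needs a version valid for arbitrary modules over a Noetherian ring; alternatively, one simply observes that in every application in this paper $D$ is an $\Ext$ module between finitely generated modules over a Noetherian ring, hence finitely generated, so the standard finitely generated form of the formula suffices. Everything else is routine bookkeeping with flatness and the zerodivisor characterization of associated primes.
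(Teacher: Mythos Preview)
The paper does not prove this lemma; it is quoted from \cite[5.6(b)]{Pu13} and used as a black box. Your argument is correct and is the standard one: both inclusions follow from the flat base-change formula
\[
\Ass_{\widehat{A}}(D\otimes_A\widehat{A})=\bigcup_{\mathfrak{p}\in\Ass_A(D)}\Ass_{\widehat{A}}(\widehat{A}/\mathfrak{p}\widehat{A})
\]
(Matsumura, \emph{Commutative Ring Theory}, Theorem~23.2(ii), valid for arbitrary $D$ since $A$ and $\widehat{A}$ are Noetherian) together with the observation---which you verify cleanly via the nonzerodivisor argument---that every $\mathfrak{q}\in\Ass_{\widehat{A}}(\widehat{A}/\mathfrak{p}\widehat{A})$ contracts to $\mathfrak{p}$. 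Your separate direct proof of the forward inclusion is fine but slightly redundant, since it too is a consequence of the displayed formula plus faithful flatness (which guarantees $\Ass_{\widehat{A}}(\widehat{A}/\mathfrak{p}\widehat{A})\neq\varnothing$). Your remark about the finitely generated hypothesis is apt: the formula holds without it over Noetherian rings, and in any case every module $D$ to which the paper applies the lemma is a finitely generated Ext module.
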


\section{Asymptotic associated primes: Finiteness}\label{Asymptotic associated primes: Finiteness}

 In this section, we prove the announced finiteness result for the set of associated prime ideals of the family of Ext-modules
 $\Ext_A^i(M, N/I^n N)$, $(i, n \geqslant 0)$, where $M$ and $N$ are finitely generated modules over a local complete intersection ring $A$
 and $I \subseteq A$ is an ideal (see Corollary~\ref{corollary:finiteness}).
 
\begin{theorem}\label{theorem:Q mod f finiteness}
 Let $Q$ be a Noetherian ring of finite Krull dimension, and let ${\bf f} = f_1,\ldots,f_c$ be a $Q$-regular sequence. Set $A :=
 Q/({\bf f})$. Let $M$ and $N$ be two finitely generated $A$-modules with $\projdim_Q(M)$ finite, and let $I$ be an ideal of $A$. Then
 \[ \bigcup_{i \geqslant 0} \bigcup_{n \geqslant 0} \Ass_A \left( \Ext_A^i(M,N/I^n N) \right) \quad \mbox{is a finite set}. \]
\end{theorem}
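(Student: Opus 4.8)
The plan is to reduce the assertion to the finiteness of $\Ass_A(\mathscr{E}(\mathcal{L}))$, where $\mathcal{L}=\bigoplus_{n\ge0}(N/I^{n+1}N)$ is the graded $\mathscr{R}(I)$-module of \ref{para:module structure 3}. Since $N/I^0N=0$, the set in the statement is exactly $\bigcup_{i\ge0}\bigcup_{n\ge0}\Ass_A(\Ext_A^i(M,N/I^{n+1}N))=\Ass_A(\mathscr{E}(\mathcal{L}))$, so it is enough to bound the latter. The obvious approach via the sequences $0\to I^nN\to N\to N/I^nN\to0$ does not work directly: $\mathcal{L}$ is \emph{not} a finitely generated $\mathscr{R}(I)$-module, so Theorem~\ref{theorem:finitely generated} does not apply to $\mathscr{E}(\mathcal{L})$, and the associated primes of the cokernels $\operatorname{coker}\bigl(\Ext_A^i(M,I^nN)\to\Ext_A^i(M,N)\bigr)$ cannot be controlled simply by viewing them as homomorphic images. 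I would instead use the associated-graded short exact sequence.

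I first record the reduction from finite generation to finiteness of $\Ass_A$: if $B$ is a Noetherian $A$-algebra and $D$ a finitely generated $B$-module, then $\Ass_A(D)$ is finite. (Take a prime filtration of $D$ over $B$ with factors $B/\mathfrak{p}_j$, $\mathfrak{p}_j\in\Spec B$; then $\Ass_A(D)\subseteq\bigcup_j\Ass_A(B/\mathfrak{p}_j)$, and $\ann_A(\bar x)=\mathfrak{p}_j\cap A$ for every nonzero $\bar x\in B/\mathfrak{p}_j$, so $\Ass_A(B/\mathfrak{p}_j)=\{\mathfrak{p}_j\cap A\}$.) Two consequences, for the Noetherian $A$-algebras $\mathscr{S}$ and $T$: (a) the module $G:=\bigoplus_{n\ge0}(I^nN/I^{n+1}N)$ is a finitely generated graded $\mathscr{R}(I)$-module (it is finitely generated over $\gr_I(A)$, a homomorphic image of $\mathscr{R}(I)$), so by Theorem~\ref{theorem:finitely generated} the $\mathscr{S}$-module $\mathscr{E}(G)=\bigoplus_{i,n}\Ext_A^i(M,I^nN/I^{n+1}N)$ is finitely generated and hence has finite $\Ass_A$; and (b) by Gulliksen's theorem (\ref{para:Gulliksen}), $\Ext_A^{\star}(M,N/IN)$ is a finitely generated $T$-module and hence has finite $\Ass_A$.

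I would then apply the functor $\bigoplus_{i,n}\Ext_A^i(M,-)$ to the short exact sequence of graded $\mathscr{R}(I)$-modules
\[0\longrightarrow G\longrightarrow\mathcal{L}\longrightarrow\mathcal{L}(-1)\longrightarrow0\]
(which in Rees degree $n$ is $0\to I^nN/I^{n+1}N\to N/I^{n+1}N\to N/I^nN\to0$). By the naturality of the structures in \ref{para:module structure 1} and \ref{para:module structure 2}, this produces a long exact sequence of bigraded $\mathscr{S}$-modules; reading off the two maps at the term $\mathscr{E}(\mathcal{L})$, and using that $\mathscr{E}(\mathcal{L}(-1))$ is the Rees-degree shift $\mathscr{E}(\mathcal{L})(-1)$, one extracts a short exact sequence of bigraded $\mathscr{S}$-modules
\[0\longrightarrow\mathcal{C}\longrightarrow\mathscr{E}(\mathcal{L})\longrightarrow\mathcal{Q}\longrightarrow0,\]
where $\mathcal{C}$ is the image of the $\mathscr{S}$-linear map $\mathscr{E}(G)\to\mathscr{E}(\mathcal{L})$ and $\mathcal{Q}$ is an $\mathscr{S}$-submodule of $\mathscr{E}(\mathcal{L})(-1)$. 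Since $\mathcal{C}$ is a homomorphic image of the finitely generated $\mathscr{S}$-module $\mathscr{E}(G)$, step (a) gives that $\Ass_A(\mathcal{C})$ is finite. Taking the component in Rees degree $n$ yields an exact sequence $0\to\mathcal{C}_n\to\mathscr{E}(\mathcal{L})_n\to\mathcal{Q}_n\to0$ with $\mathcal{Q}_n\subseteq\mathscr{E}(\mathcal{L}(-1))_n=\mathscr{E}(\mathcal{L})_{n-1}$, so
\[\Ass_A\bigl(\mathscr{E}(\mathcal{L})_n\bigr)\ \subseteq\ \Ass_A(\mathcal{C})\ \cup\ \Ass_A\bigl(\mathscr{E}(\mathcal{L})_{n-1}\bigr).\]
Inducting on $n$ gives $\Ass_A(\mathscr{E}(\mathcal{L})_n)\subseteq\Ass_A(\mathcal{C})\cup\Ass_A(\mathscr{E}(\mathcal{L})_0)$ for all $n\ge0$, and $\mathscr{E}(\mathcal{L})_0=\Ext_A^{\star}(M,N/IN)$ has finite $\Ass_A$ by step (b). Therefore $\Ass_A(\mathscr{E}(\mathcal{L}))=\bigcup_{n\ge0}\Ass_A(\mathscr{E}(\mathcal{L})_n)$ is finite, which proves the theorem.

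The crux — and the reason for choosing the associated-graded sequence over $0\to I^nN\to N\to N/I^nN\to0$ — is that $\mathscr{E}(\mathcal{L})$ itself is not finitely generated over $\mathscr{S}$, so neither Theorem~\ref{theorem:finitely generated} nor a direct subquotient argument applies to it. The associated-graded sequence instead exhibits $\mathscr{E}(\mathcal{L})$ as an extension of a submodule of its own Rees-degree shift $\mathscr{E}(\mathcal{L})(-1)$ by a genuinely finitely generated $\mathscr{S}$-module; this self-referential shape is precisely what lets the induction on the Rees degree run, with the base case $n=0$ handled by Gulliksen's finiteness theorem.
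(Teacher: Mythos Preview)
Your proof is correct and follows essentially the same route as the paper: both use the associated-graded short exact sequence $0\to\gr_I(N)\to\mathcal{L}\to\mathcal{L}(-1)\to0$, pass to the induced $\mathscr{S}$-module sequence, set $\mathcal{C}$ (the paper's $U$) equal to the image of $\mathscr{E}(\gr_I(N))\to\mathscr{E}(\mathcal{L})$, observe it is finitely generated over $\mathscr{S}$ by Theorem~\ref{theorem:finitely generated}, and then induct on the Rees degree. One small simplification the paper makes: the induction actually bottoms out at $\mathscr{E}(\mathcal{L})_{-1}=0$ (since $N/I^0N=0$), so your separate invocation of Gulliksen's theorem for $\mathscr{E}(\mathcal{L})_0=\Ext_A^{\star}(M,N/IN)$ is unnecessary---step (b) can be dropped and the conclusion reads simply $\Ass_A(\mathscr{E}(\mathcal{L})_n)\subseteq\Ass_A(\mathcal{C})$ for all $n\ge0$.
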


\begin{proof}
 For a fixed $n \geqslant 0$, consider the short exact sequence of $A$-modules:
 \[ 
   0 \longrightarrow I^nN/I^{n+1}N \longrightarrow N/I^{n+1}N \longrightarrow N/I^nN \longrightarrow 0.
 \]
 Taking direct sum over $n \geqslant 0$ and setting $\mathcal{L} := \bigoplus_{n \geqslant 0}(N/I^{n+1}N)$, we obtain the following short exact
 sequence of graded $\mathscr{R}(I)$-modules:
 \[
   0 \longrightarrow \gr_I(N) \longrightarrow \mathcal{L} \longrightarrow \mathcal{L}(-1) \longrightarrow 0,
 \]
 which induces an exact sequence of graded $\mathscr{R}(I)$-modules for each $i \geqslant 0$:
 \[ 
   \Ext_A^i(M,\gr_I(N)) \longrightarrow \Ext_A^i(M,\mathcal{L}) \longrightarrow \Ext_A^i(M,\mathcal{L}(-1)).
 \]
 Taking direct sum over $i \geqslant 0$ and using the naturality of the cohomology operators $t_j$, we get the following exact sequence of
 bigraded $\mathscr{S} = \mathscr{R}(I)[t_1,\ldots,t_c]$-modules:
 \[
  \bigoplus_{i,n\geqslant 0} \Ext_A^i\left(M,\dfrac{I^nN}{I^{n+1}N}\right) \stackrel{\Phi}{\longrightarrow}
  \bigoplus_{i,n\geqslant 0} V_{i,n} \stackrel{\Psi}{\longrightarrow}
  \bigoplus_{i,n\geqslant 0} V_{i,n-1},
 \]
 where $V_{i,n} := \Ext_A^i(M,N/I^{n+1}N)$ for each $i \geqslant 0$ and $n \geqslant -1$. Let
 \[
   U = \bigoplus_{i,n \geqslant 0} U_{i,n} := \Image(\Phi).
 \]
 Then, for each $i, n \geqslant 0$, considering the exact sequence of $A$-modules:
 \[ 0 \rightarrow U_{i,n} \rightarrow V_{i,n} \rightarrow V_{i,n-1},\]
 we have 
 \begin{align*}
  \Ass_A(V_{i,n})&\subseteq \Ass_A(U_{i,n}) \cup \Ass_A(V_{i,n-1}) \\
                 &\subseteq \Ass_A(U_{i,n}) \cup \Ass_A(U_{i,n-1}) \cup \Ass_A(V_{i,n-2}) \\
                 & ~~\vdots \\
    &\subseteq \bigcup_{0\leqslant j\leqslant n}\Ass_A(U_{i,j}) \quad \mbox{[as $\Ass_A(V_{i,-1}) = \phi$ for each $i \geqslant 0$]}.
 \end{align*}
 Taking union over $i,n \geqslant 0$, we obtain
 \begin{equation}\label{VU containment}
  \bigcup_{i,n\geqslant 0}\Ass_A(V_{i,n}) \subseteq \bigcup_{i,n\geqslant 0}\Ass_A(U_{i,n}).
 \end{equation}
 
 Since $\gr_I(N)$ is a finitely generated graded $\mathscr{R}(I)$-module, by Theorem~\ref{theorem:finitely generated},
 \[ 
  \bigoplus_{i,n\geqslant 0} \Ext_A^i\left(M, \dfrac{I^n N}{I^{n+1} N}\right)
 \]
 is a finitely generated bigraded $\mathscr{S}$-module, and hence $U$ is a finitely generated bigraded $\mathscr{S}$-module.
 Therefore, by \cite[Lemma~3.2]{We04},
 \begin{equation}\label{U finite}
  \bigcup_{i,n\geqslant 0}\Ass_A(U_{i,n})\quad\mbox{ is a finite set.}
 \end{equation}
 Now the result follows from \eqref{VU containment} and \eqref{U finite}.
\end{proof}

An immediate corollary is the following:

\begin{corollary}\label{corollary:finiteness}
 Let $(A,\mathfrak{m})$ be a local complete intersection ring. Let $M,N$ be two finitely generated $A$-modules and
 $I$ an ideal of $A$. Then
 \[\bigcup_{i\geqslant 0}\bigcup_{n\geqslant 0}\Ass_A\left(\Ext_A^i(M,N/I^n N)\right) \quad\mbox{is a finite set.}\]
\end{corollary}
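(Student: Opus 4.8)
The plan is to deduce this from Theorem~\ref{theorem:Q mod f finiteness} by passing to the $\mathfrak{m}$-adic completion $\widehat{A}$ and then descending associated primes via Lemma~\ref{lemma:associated}. Since $A$ is a local complete intersection ring, by definition its completion can be written as $\widehat{A} = Q/({\bf f})$, where $(Q,\mathfrak{n})$ is a complete regular local ring and ${\bf f} = f_1,\ldots,f_c$ is a $Q$-regular sequence; in particular $Q$ is Noetherian of finite Krull dimension. Set $\widehat{M} = M\otimes_A\widehat{A}$ and $\widehat{N} = N\otimes_A\widehat{A}$, which are finitely generated $\widehat{A}$-modules, and since $Q$ is regular, $\projdim_Q(\widehat{M})$ is finite. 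Thus the hypotheses of Theorem~\ref{theorem:Q mod f finiteness} hold for $Q$, ${\bf f}$, $\widehat{A}$, $\widehat{M}$, $\widehat{N}$ and the ideal $I\widehat{A}$, so $\bigcup_{i,n\ge 0}\Ass_{\widehat{A}}\big(\Ext_{\widehat{A}}^i(\widehat{M}, \widehat{N}/(I\widehat{A})^n\widehat{N})\big)$ is a finite set.

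Next I would transport this back to $A$. Because $A$ is Noetherian, $M$ is finitely generated, and $\widehat{A}$ is flat over $A$, cohomology commutes with this base change: for every finitely generated $A$-module $D$ one has $\Ext_A^i(M,D)\otimes_A\widehat{A}\cong\Ext_{\widehat{A}}^i(\widehat{M}, D\otimes_A\widehat{A})$. Applying this with $D = N/I^nN$ and using $(N/I^nN)\otimes_A\widehat{A}\cong\widehat{N}/I^n\widehat{N} = \widehat{N}/(I\widehat{A})^n\widehat{N}$, we obtain $\Ext_A^i(M,N/I^nN)\otimes_A\widehat{A}\cong\Ext_{\widehat{A}}^i(\widehat{M}, \widehat{N}/(I\widehat{A})^n\widehat{N})$ for all $i,n\ge 0$.

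Finally I would invoke Lemma~\ref{lemma:associated}: for each pair $(i,n)$,
\[
\Ass_A\big(\Ext_A^i(M,N/I^nN)\big) = \big\{\mathfrak{q}\cap A : \mathfrak{q}\in\Ass_{\widehat{A}}\big(\Ext_{\widehat{A}}^i(\widehat{M}, \widehat{N}/(I\widehat{A})^n\widehat{N})\big)\big\}.
\]
Taking the union over all $i,n\ge 0$, the left-hand side is the image under the contraction map $\mathfrak{q}\mapsto\mathfrak{q}\cap A$ of the finite set produced in the first paragraph, hence is itself finite. Every step here is merely an application of a standard fact or of an already-established result, so there is no serious obstacle; the only point requiring care is to record explicitly that the completion of a local complete intersection ring genuinely has the form $Q/({\bf f})$ with $Q$ a regular local ring of finite Krull dimension, so that Theorem~\ref{theorem:Q mod f finiteness} applies.
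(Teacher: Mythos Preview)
Your proof is correct and follows essentially the same route as the paper: pass to the completion $\widehat{A}=Q/({\bf f})$, apply Theorem~\ref{theorem:Q mod f finiteness} there, identify $\Ext_A^i(M,N/I^nN)\otimes_A\widehat{A}$ with $\Ext_{\widehat{A}}^i(\widehat{M},\widehat{N}/(I\widehat{A})^n\widehat{N})$ by flat base change, and descend via Lemma~\ref{lemma:associated}. You have simply made explicit the justifications (regularity of $Q$ gives finite $\projdim_Q(\widehat{M})$, finite Krull dimension, the flat base-change isomorphism) that the paper leaves implicit.
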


\begin{proof}
 Since $A$ is a local complete intersection ring, $\widehat{A} = Q/({\bf f})$, where $Q$ is a regular
 local ring and ${\bf f}=f_1,\ldots,f_c$ is a $Q$-regular sequence.
 
 Applying Theorem~\ref{theorem:Q mod f finiteness} for the ring $\widehat{A}$, we have that
 \[
   \bigcup_{i,n \geqslant 0} \Ass_{\widehat{A}}\left(\Ext_A^i(M,N/I^nN)\otimes_A\widehat{A}\right) = 
   \bigcup_{i,n \geqslant 0} \Ass_{\widehat{A}}
   \left(\Ext_{\widehat{A}}^i\left(\widehat{M},\widehat{N}/(I\widehat{A})^n\widehat{N}\right)\right)
 \]
 is a finite set, and hence the result follows from Lemma~\ref{lemma:associated}. 
\end{proof}

\section{Asymptotic associated primes: Stability}\label{Asymptotic associated primes: Stability}
 
 In the present section, we analyze the asymptotic behaviour of the sets of associated prime ideals of Ext-modules
 $\Ext_A^i(M, N/I^n N)$, $(i, n \geqslant 0)$, where $M$ and $N$ are finitely generated modules over a local complete intersection ring $A$
 and $I \subseteq A$ is an ideal (see Corollary~\ref{corollary:stability}). We first prove the following theorem:
 
\begin{theorem}\label{theorem:Q mod f stability}
 Let $Q$ be a Noetherian ring of finite Krull dimension, and let ${\bf f} = f_1,\ldots,f_c$ be a $Q$-regular sequence. Set $A :=
 Q/({\bf f})$. Let $M$ and $N$ be two finitely generated $A$-modules with $\projdim_Q(M)$ finite, and let $I$ be an ideal of $A$.
 Then there exist $i_0, n_0 \geqslant 0$ such that for all $i\geqslant i_0$ and $n \geqslant n_0$, we have
 \begin{align*}
  \Ass_A\left(\Ext_A^{2i}(M,N/I^nN)\right) &= \Ass_A\left(\Ext_A^{2 i_0}(M,N/I^{n_0}N)\right), \\
  \Ass_A\left(\Ext_A^{2i+1}(M,N/I^nN)\right) &= \Ass_A\left(\Ext_A^{2 i_0 + 1}(M,N/I^{n_0}N)\right).
 \end{align*}
\end{theorem}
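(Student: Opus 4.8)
The plan is to carry out the reduction from the proof of Theorem~\ref{theorem:Q mod f finiteness} and then promote the inclusion of associated-prime sets obtained there to an equality of \emph{stable} sets, using a stability statement for finitely generated bigraded $\mathscr{S}$-modules together with Gulliksen's theorem. Write $V_{i,n}:=\Ext_A^i(M,N/I^{n+1}N)$, so that $\mathscr{E}(\mathcal{L})=\bigoplus_{i,n\ge 0}V_{i,n}$, and let $\Psi_{i,n}\colon V_{i,n}\to V_{i,n-1}$ be the map induced by $N/I^{n+1}N\twoheadrightarrow N/I^nN$ (with $V_{i,-1}=0$). Applying $\Ext_A^{\star}(M,-)$ to $0\to\gr_I(N)\to\mathcal{L}\to\mathcal{L}(-1)\to 0$ and using naturality of the Eisenbud operators produces the long exact sequence of bigraded $\mathscr{S}$-modules of the proof of Theorem~\ref{theorem:Q mod f finiteness}; set $U_{i,n}=\ker\Psi_{i,n}$ and $C_{i,n}=\operatorname{im}\Psi_{i,n}\subseteq V_{i,n-1}$. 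By exactness, $\mathcal{U}:=\bigoplus U_{i,n}$ is (up to a degree shift) the image of $\mathscr{E}(\gr_I N)$ inside $\mathscr{E}(\mathcal{L})$, and $\mathcal{B}:=\bigoplus\operatorname{coker}\Psi_{i,n}$ is (up to a shift) the kernel of $\mathscr{E}(\gr_I N)\to\mathscr{E}(\mathcal{L})$; since $\mathscr{E}(\gr_I N)$ is a finitely generated $\mathscr{S}$-module by Theorem~\ref{theorem:finitely generated} and $\mathscr{S}$ is Noetherian, \emph{both $\mathcal{U}$ and $\mathcal{B}$ are finitely generated bigraded $\mathscr{S}$-modules}. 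From $0\to U_{i,n}\to V_{i,n}\to C_{i,n}\to 0$ and $0\to C_{i,n}\to V_{i,n-1}\to\operatorname{coker}\Psi_{i,n}\to 0$ one obtains, as in the proof of Theorem~\ref{theorem:Q mod f finiteness} together with the obvious inclusion $U_{i,n}\subseteq V_{i,n}$, the two-sided bound
\[
 \Ass_A(U_{i,n})\ \subseteq\ \Ass_A(V_{i,n})\ \subseteq\ \bigcup_{j=0}^{n}\Ass_A(U_{i,j}).
\]

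The technical heart, which I would establish first, is: if $E=\bigoplus_{i,n\ge 0}E_{i,n}$ is a finitely generated bigraded $\mathscr{S}=\mathscr{R}(I)[t_1,\ldots,t_c]$-module, then there exist $i_0,n_0$ with $\Ass_A(E_{2i,n})=\Ass_A(E_{2i_0,n_0})$ and $\Ass_A(E_{2i+1,n})=\Ass_A(E_{2i_0+1,n_0})$ for all $i\ge i_0$, $n\ge n_0$. By \cite[Lemma~3.2]{We04} the set $\bigcup_{i,n}\Ass_A(E_{i,n})$ is finite, so it suffices to show, for each $\mathfrak{p}$ in it, that membership of $\mathfrak{p}$ in $\Ass_A(E_{2i,n})$ is eventually constant on the quadrant (and likewise in odd degrees). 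Localising $A$ and $E$ at $\mathfrak{p}$, we may assume $(A,\mathfrak{p})$ is local; then $\mathfrak{p}\in\Ass_A(E_{2i,n})$ iff the bidegree-$(2i,n)$ part of the bigraded submodule $(0:_E\mathfrak{p})$ is nonzero. After restricting to even cohomological degrees and halving that index, $(0:_E\mathfrak{p})$ becomes a finitely generated module over a standard bigraded algebra over the field $A/\mathfrak{p}$, so its bigraded Hilbert function agrees with a polynomial for all large bidegrees, and a polynomial vanishes identically or is eventually nonzero; taking the maximum of the finitely many resulting thresholds completes the proof. (The one-parameter case $I=0$ — eventual even/odd constancy of $\Ass_A$ of the graded pieces of a finitely generated graded $T=A[t_1,\ldots,t_c]$-module — is needed below alongside Gulliksen~\cite[3.1]{Gu74}.)

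Applying this to the finitely generated modules $\mathcal{U}$ and $\mathcal{B}$ yields a bound $(i_1,n_1)$ past which $\Ass_A(U_{2i,n})$, $\Ass_A(U_{2i+1,n})$ (and the corresponding sets for $\mathcal{B}$) are constant. For each of the finitely many $j<n_1$, the slice $\bigoplus_i U_{i,j}$ is the kernel of the $T$-linear map $\Ext_A^{\star}(M,N/I^{j+1}N)\to\Ext_A^{\star}(M,N/I^jN)$, hence a finitely generated graded $T$-module by Gulliksen~\cite[3.1]{Gu74}, so the one-parameter case gives $i_2$ past which $\Ass_A(U_{2i,j})$ is constant in $i$ for every such $j$. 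Then for $i\ge\max(i_1,i_2)$ and $n\ge n_1$ the right-hand side of the two-sided bound is a \emph{fixed} finite set and its left-hand side is the \emph{fixed} stable set coming from $\mathcal{U}$, so $\Ass_A(V_{2i,n})$ is trapped between two constants. \textbf{The main obstacle} is to close this gap — to show $\Ass_A(V_{2i,n})$ is itself eventually constant, not merely squeezed. I expect this requires re-entering the long exact sequence: the sequence $0\to C_{i,n}\to V_{i,n-1}\to\operatorname{coker}\Psi_{i,n}\to 0$ exhibits $V_{i,n-1}$ as an extension of a component of the finitely generated module $\mathcal{B}$ by $C_{i,n}\cong V_{i,n}/U_{i,n}$, which should allow a descending induction on the Rees degree identifying $\Ass_A(C_{i,n})$, hence $\Ass_A(V_{i,n})$, with a fixed set — the bounded-Rees-degree base cases being controlled by Gulliksen's theorem applied for each fixed $n$ (which already gives eventual constancy in $i$; the point of the bigraded lemma is to make this uniform in $n$). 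The odd case is parallel, and combining the even and odd conclusions gives the theorem; the subsequent passage to a general local complete intersection ring is routine via Lemma~\ref{lemma:associated}, exactly as in the proof of Corollary~\ref{corollary:finiteness}.
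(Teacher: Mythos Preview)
Your proposal has a genuine gap, and you correctly identify it yourself: the sandwich
\[
 \Ass_A(U_{2i,n})\ \subseteq\ \Ass_A(V_{2i,n})\ \subseteq\ \bigcup_{j=0}^{n}\Ass_A(U_{2i,j})
\]
only traps $\Ass_A(V_{2i,n})$ between two fixed sets once $i,n$ are large, and these two sets need not coincide (the right-hand side picks up the low-$j$ contributions, which are governed by Gulliksen's theorem but have no reason to lie in the stable set of $\mathcal{U}$). Your sketch of a descending induction via $0\to C_{i,n}\to V_{i,n-1}\to\operatorname{coker}\Psi_{i,n}\to 0$ does not close this: knowing $\Ass_A(\operatorname{coker}\Psi_{i,n})$ and $\Ass_A(V_{i,n-1})$ only bounds $\Ass_A(C_{i,n})$ from above, and the extension $0\to U_{i,n}\to V_{i,n}\to C_{i,n}\to 0$ again only gives inclusions, not equalities. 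There is no mechanism in this filtration-by-associated-graded approach that forces the trapped set to stabilise.

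The paper avoids this entirely by discarding the sequence $0\to\gr_I(N)\to\mathcal{L}\to\mathcal{L}(-1)\to 0$ in favour of $0\to I^nN\to N\to N/I^nN\to 0$. After localising at a prime and passing to $\Hom_A(k,-)$, this expresses $\Hom_A(k,V_{i,n})$ as built from pieces of (i) the finitely generated $\mathscr{S}$-module $\bigoplus_{i,n}\Ext_A^i(M,I^nN)$ and (ii) the module $\bigoplus_{i,n}\Ext_A^i(M,N)$, which is \emph{not} finitely generated over $\mathscr{S}$ but whose $(i,n)$-component is independent of $n$, so its length generating function is visibly $P(z)/((1-z^2)^c(1-w))$. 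Additivity of length along the resulting short exact sequences then shows directly that $\sum_{i,n}\lambda_A(\Hom_A(k,V_{i,n}))z^iw^n$ is rational of the form $P(z,w)/((1-z^2)^c(1-w)^r)$, hence $\lambda_A(\Hom_A(k,V_{2i,n}))$ is eventually polynomial in $(i,n)$, and the dichotomy for membership of $\mathfrak{m}$ follows. The point is that one never needs $\mathscr{E}(\mathcal{L})$ to be finitely generated: the ``constant'' middle term $N$ contributes a generating function of the right shape for free.
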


 To prove this result, we assume the following lemma which we prove at the end of this section.
 
\begin{lemma}\label{lemma:polynomial}
 Let $(Q,\mathfrak{n})$ be a Noetherian local ring with residue field $k$, and let ${\bf f} = f_1,\ldots,f_c$ be a $Q$-regular
 sequence. Set $A := Q/({\bf f})$. Let $M$ and $N$ be two finitely generated $A$-modules with $\projdim_Q(M)$ finite, and let $I$ be
 an ideal of $A$. Then
 \[
   \lambda_A\left(\Hom_A\left(k,\Ext_A^{2i}(M,N/I^nN)\right)\right) ~\mbox{ and }~
   \lambda_A\left(\Hom_A\left(k,\Ext_A^{2i+1}(M,N/I^nN)\right)\right)
 \]
 are given by polynomials in $i, n$ with rational coefficients for all sufficiently large $(i,n)$.
\end{lemma}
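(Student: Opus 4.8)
The plan is to deduce the lemma from the finite generation of $\mathscr{E}(\gr_I(N))$ (Theorem~\ref{theorem:finitely generated}) by passing to socles and then telescoping along the $I$-adic filtration. The point that makes this nontrivial is that $\mathscr{E}(\mathcal{L})=\bigoplus_{i,n}\Ext_A^i(M,N/I^{n+1}N)$ is itself \emph{not} a finitely generated $\mathscr{S}$-module, so the Hilbert-function machinery cannot be applied to it directly.

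\emph{Step 1 (the associated graded case).} Since $\gr_I(N)$ is a finitely generated graded $\mathscr{R}(I)$-module, Theorem~\ref{theorem:finitely generated} makes $\mathscr{E}(\gr_I(N))=\bigoplus_{i,n}\Ext_A^i(M,I^nN/I^{n+1}N)$ a finitely generated bigraded $\mathscr{S}=\mathscr{R}(I)[t_1,\dots,t_c]$-module. As $k$ is a finitely generated $A$-module, $\Hom_A(k,-)$ commutes with the direct sums involved and identifies $\Hom_A(k,\mathscr{E}(\gr_I(N)))$ with the socle of $\mathscr{E}(\gr_I(N))$; since the $t_j$ and the multiplication operators $uX^s$ act $A$-linearly they preserve this socle, which is therefore a bigraded $\mathscr{S}$-submodule annihilated by $\mathfrak{m}$, i.e.\ a finitely generated bigraded module over $\mathscr{S}/\mathfrak{m}\mathscr{S}=F[t_1,\dots,t_c]$, where $F=\bigoplus_{n\ge0}I^n/\mathfrak{m}I^n$ is the fibre cone of $I$---a finitely generated graded $k$-algebra generated in bidegrees $(1,0)$ and $(0,2)$. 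Presenting $F[t_1,\dots,t_c]$ as a quotient of a bigraded polynomial ring over $k$ with variables in those bidegrees and using finite bigraded free resolutions yields the fact used repeatedly below: the Hilbert function of a finitely generated bigraded $F[t_1,\dots,t_c]$-module agrees, for $n$ and $j$ large with $j$ of a fixed parity, with a polynomial in $(j,n)$, and for each fixed $n$ it agrees, for $j$ large of a fixed parity, with a polynomial in $j$. Applied to $\Hom_A(k,\mathscr{E}(\gr_I(N)))$, this already proves the lemma with $N/I^nN$ replaced by $I^nN/I^{n+1}N$.

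\emph{Step 2 (transfer to $N/I^nN$).} I would reuse the exact sequence from the proof of Theorem~\ref{theorem:Q mod f finiteness}: applying $\Ext_A^\bullet(M,-)$ to $0\to\gr_I(N)\to\mathcal{L}\to\mathcal{L}(-1)\to0$, assembling over $i$, and using that the cohomology operators commute with connecting homomorphisms gives an exact sequence of bigraded $\mathscr{S}$-modules
\[0\longrightarrow U\longrightarrow W\stackrel{\Psi}{\longrightarrow}W(-1,0)\stackrel{\delta}{\longrightarrow}\mathscr{E}(\gr_I(N))(0,1),\]
where $W=\mathscr{E}(\mathcal{L})$, the module $U=\ker\Psi=\Image\Phi$ is finitely generated (being a quotient of $\mathscr{E}(\gr_I(N))$), and the cokernel $C$ of $\Psi$ is finitely generated (via $\delta$ it is isomorphic to a submodule of $\mathscr{E}(\gr_I(N))(0,1)$). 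Now apply $\Hom_A(k,-)$, set $S=\Hom_A(k,W)$, and let $\Psi'=\Hom_A(k,\Psi)\colon S\to S(-1,0)$ have kernel $S_0$ and cokernel $S_1$. Left-exactness gives $S_0=\Hom_A(k,U)$; factoring $\Psi$ through $\Image\Psi$ and chasing $0\to U\to W\to\Image\Psi\to0$ and $0\to\Image\Psi\to W(-1,0)\to C\to0$ exhibits $S_1$ as an extension of a submodule of $\Hom_A(k,C)$ by a submodule of $\Ext_A^1(k,U)$. For a finitely generated $\mathscr{S}$-module $X$, computing $\Ext_A^\bullet(k,X)$ from a finite free resolution of $k$ over $A$ shows each $\Ext_A^j(k,X)$ is a finitely generated $\mathscr{S}$-module annihilated by $\mathfrak{m}$, hence finitely generated over $F[t_1,\dots,t_c]$; as $\mathscr{S}$, and therefore $F[t_1,\dots,t_c]$, is Noetherian, both $S_0$ and $S_1$ are finitely generated bigraded $F[t_1,\dots,t_c]$-modules, and Step 1 applies to their Hilbert functions $h_0(j,n)$ and $h_1(j,n)$.

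\emph{Step 3 (telescoping).} Comparing bidegrees in the four-term exact sequence $0\to S_0\to S\to S(-1,0)\to S_1\to0$ gives, with $h(j,n):=\lambda_A(\Hom_A(k,\Ext_A^j(M,N/I^{n+1}N)))$,
\[h(j,n)=h(j,n-1)+h_0(j,n)-h_1(j,n)\quad(n\ge1),\qquad\text{hence}\qquad h(j,n)=h(j,0)+\sum_{m=1}^{n}\bigl(h_0(j,m)-h_1(j,m)\bigr).\]
Here $h(j,0)=\lambda_A(\Hom_A(k,\Ext_A^j(M,N/IN)))$ is the Hilbert function of $\Hom_A(k,\Ext_A^{\star}(M,N/IN))$, which is finitely generated over $k[t_1,\dots,t_c]$ because $\Ext_A^{\star}(M,N/IN)$ is finitely generated over $A[t_1,\dots,t_c]$ by Gulliksen's theorem (Paragraph~\ref{para:Gulliksen}); so $h(j,0)$ is a polynomial in $j$ for $j$ large of a fixed parity. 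Splitting $\sum_{m=1}^{n}$ at the (parity-uniform) threshold $m_0$ past which $h_0$ and $h_1$ are given by their polynomials, the bounded part $\sum_{m<m_0}$ is a polynomial in $j$ (for $j$ large of a fixed parity) by Step 1, and the tail $\sum_{m=m_0}^{n}$ is a polynomial in $(j,n)$ by Faulhaber summation. Therefore $h(j,n)$ is a polynomial in $(j,n)$ for $(j,n)$ large with $j$ of a fixed parity; reindexing $n\mapsto n-1$ and taking $j=2i$, resp.\ $j=2i+1$, gives the lemma. The main obstacle---and the reason the telescoping is needed---is exactly the failure of $\mathscr{E}(\mathcal{L})$ to be finitely generated over $\mathscr{S}$: the argument works only because the auxiliary modules $S_0,S_1$ and the single module $\Ext_A^{\star}(M,N/IN)$ are finitely generated, after which everything is Hilbert-function bookkeeping.
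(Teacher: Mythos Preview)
Your argument is correct, and it is a genuinely different route from the paper's. The paper starts instead from the short exact sequence $0\to I^nN\to N\to N/I^nN\to 0$, obtaining a four--term sequence of bigraded $\mathscr{S}$-modules $U\to T\to V\to U(0,1)$ with $U=\bigoplus_{i,n}\Ext_A^i(M,I^nN)$ finitely generated (Theorem~\ref{theorem:finitely generated}) and $T=\bigoplus_{i,n}\Ext_A^i(M,N)$ \emph{constant} in $n$. After applying $\Hom_A(k,-)$ the paper computes all five Hilbert series directly via the Hilbert--Serre theorem: the finitely generated pieces contribute rational functions with denominator $(1-z^2)^c(1-w)^r$, while the constant-in-$n$ module $T$ contributes $P_S(z)\big/\bigl((1-z^2)^c(1-w)\bigr)$; additivity of length then yields the Hilbert series of $\Hom_A(k,V)$ in the required form, without any telescoping.

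By contrast you use the $\gr_I(N)$-sequence (the same one driving Theorem~\ref{theorem:Q mod f finiteness}), which produces a \emph{shift} map $\Psi\colon W\to W(-1,0)$ rather than a constant middle term; you then feed the recursion $h(j,n)=h(j,n-1)+h_0(j,n)-h_1(j,n)$ into a Faulhaber summation. What the paper's approach buys is brevity and a clean generating-function identity with no inductive bookkeeping; what your approach buys is conceptual economy---it reuses verbatim the exact sequence already set up for the finiteness theorem, and it makes explicit that the only finitely generated inputs needed are $\mathscr{E}(\gr_I(N))$ together with Gulliksen's theorem for a single second argument $N/IN$, rather than the Rees module $\mathscr{R}(I,N)$. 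Both proofs ultimately rest on Theorem~\ref{theorem:finitely generated} and on the standard bigraded Hilbert theory.
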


\begin{proof}[Proof of Theorem~\ref{theorem:Q mod f stability}]
 Applying Theorem~\ref{theorem:Q mod f finiteness}, we may assume that
 \[
 \bigcup_{i\geqslant 0}\bigcup_{n\geqslant 0}\Ass_A\left(\Ext_A^i(M,N/I^n N)\right)
  = \{\mathfrak{p}_1,\mathfrak{p}_2,\ldots,\mathfrak{p}_l\}.
 \]
 Set $V_{i,n} := \Ext_A^i(M,N/I^n N)$ for each $i, n \geqslant 0$, and $V := \bigoplus_{i, n \geqslant 0} V_{i,n}$. 
 
 We first prove that there exist some $i', n' \geqslant 0$ such that
 \begin{equation}\label{equation:even stability}
  \Ass_A(V_{2i,n}) = \Ass_A(V_{2i',n'}) \quad \mbox{for all }i \geqslant i' \mbox{ and } n \geqslant n'.
 \end{equation}
 To prove the claim \eqref{equation:even stability}, it is enough to prove that for each $\mathfrak{p}_j$, where $1 \leqslant j \leqslant l$,
 there exist some $i_{j_0}, n_{j_0} \geqslant 0$ such that exactly one of the following alternatives must hold:
 \begin{align*}
  \mbox{either}\quad \mathfrak{p}_j &\in \Ass_A(V_{2i,n})\quad\mbox{for all }i \geqslant i_{j_0} \mbox{ and } n \geqslant n_{j_0}; \\
  \mbox{or}\quad     \mathfrak{p}_j &\notin \Ass_A(V_{2i,n})\quad\mbox{for all }i \geqslant i_{j_0} \mbox{ and } n \geqslant n_{j_0}.
 \end{align*}
 Localizing at $\mathfrak{p}_j$, and replacing $A_{\mathfrak{p}_j}$ by $A$ and $\mathfrak{p}_j A_{\mathfrak{p}_j}$ by $\mathfrak{m}$,
 it is now enough to prove that there exist some $i', n' \geqslant 0$ such that
 \begin{align}
  \mbox{either}\quad \mathfrak{m} &\in \Ass_A(V_{2i,n})\quad\mbox{for all }i \geqslant i' \mbox{ and } n \geqslant n';   
                                                                                       \label{equation:m in} \\
  \mbox{or}\quad     \mathfrak{m} &\notin \Ass_A(V_{2i,n})\quad\mbox{for all }i \geqslant i' \mbox{ and } n \geqslant n'.
											  \label{equation:m not in}
 \end{align}
 But from Lemma~\ref{lemma:polynomial}, $\lambda_A(\Hom_A(k,V_{2i,n}))$ is given by a polynomial in $i, n$ with rational coefficients
 for all sufficiently large $(i,n)$, and hence there exist $i',n' \geqslant 0$ such that
 \begin{align*}
  \mbox{either}\quad \Hom_A(k,V_{2i,n}) &\neq 0 \quad\mbox{for all }i \geqslant i' \mbox{ and } n \geqslant n'; \\
  \mbox{or}\quad     \Hom_A(k,V_{2i,n}) &= 0\quad\mbox{for all }i \geqslant i' \mbox{ and } n \geqslant n',
 \end{align*}
 which is equivalent to that either \eqref{equation:m in} is true, or \eqref{equation:m not in} is true.
 
 Applying a similar procedure as in the even case, we see that there exist $i'', n'' \geqslant 0$ such that
 \begin{equation*}
  \Ass_A(V_{2i+1,n}) = \Ass_A(V_{2i''+1,n''})\quad\mbox{for all }i \geqslant i'' \mbox{ and } n \geqslant n''.
 \end{equation*}
 Now $(i_0,n_0) := \max\{(i',n'),(i'',n'')\}$ satisfies the required result of the theorem.
\end{proof}

An immediate corollary of the Theorem~\ref{theorem:Q mod f stability} is the following:

\begin{corollary}\label{corollary:stability}
 Let $(A,\mathfrak{m})$ be a local complete intersection ring. Let $M$ and $N$ be two finitely generated $A$-modules, and let $I$ be
 an ideal of $A$. Then there exist $i_0, n_0 \geqslant 0$ such that for all $i \geqslant i_0$ and $n \geqslant n_0$, we have
 \begin{align*}
  \Ass_A\left(\Ext_A^{2i}(M,N/I^nN)\right) &= \Ass_A\left(\Ext_A^{2 i_0}(M,N/I^{n_0}N)\right), \\
  \Ass_A\left(\Ext_A^{2i+1}(M,N/I^nN)\right) &= \Ass_A\left(\Ext_A^{2 i_0 + 1}(M,N/I^{n_0}N)\right).
 \end{align*}
\end{corollary}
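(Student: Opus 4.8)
The plan is to deduce Corollary~\ref{corollary:stability} from Theorem~\ref{theorem:Q mod f stability} by passing to the completion, exactly as Corollary~\ref{corollary:finiteness} was deduced from Theorem~\ref{theorem:Q mod f finiteness}. Since $A$ is a local complete intersection ring, its completion may be written $\widehat{A} = Q/({\bf f})$, where $(Q,\mathfrak{n})$ is a regular local ring and ${\bf f}=f_1,\ldots,f_c$ is a $Q$-regular sequence. Because $Q$ is regular, every finitely generated $Q$-module has finite projective dimension; in particular $\projdim_Q(\widehat{M})$ is finite, so the hypotheses of Theorem~\ref{theorem:Q mod f stability} are satisfied for the data $\widehat{M}$, $\widehat{N}$, $I\widehat{A}$ over $\widehat{A}$.

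First I would record the flat base change isomorphism
\[
\Ext_A^i(M, N/I^n N) \otimes_A \widehat{A} \;\cong\; \Ext_{\widehat{A}}^i\!\left(\widehat{M},\, \widehat{N}/(I\widehat{A})^n\widehat{N}\right)
\]
for all $i,n\ge 0$, which holds since $\widehat{A}$ is flat over $A$, $M$ is finitely presented, and $\widehat{N}/(I\widehat{A})^n\widehat{N} \cong (N/I^n N)\otimes_A\widehat{A}$. Applying Theorem~\ref{theorem:Q mod f stability} over $\widehat{A}$ then yields $i_0,n_0\ge 0$ such that for all $i\ge i_0$ and $n\ge n_0$,
\[
\Ass_{\widehat{A}}\!\left(\Ext_{\widehat{A}}^{2i}\!\left(\widehat{M},\widehat{N}/(I\widehat{A})^n\widehat{N}\right)\right) = \Ass_{\widehat{A}}\!\left(\Ext_{\widehat{A}}^{2i_0}\!\left(\widehat{M},\widehat{N}/(I\widehat{A})^{n_0}\widehat{N}\right)\right),
\]
and similarly for the odd-degree Ext modules.

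Finally I would transfer this back to $A$ via Lemma~\ref{lemma:associated}: for any $A$-module $D$ one has $\Ass_A(D) = \{\mathfrak{q}\cap A : \mathfrak{q}\in\Ass_{\widehat{A}}(D\otimes_A\widehat{A})\}$. Taking $D=\Ext_A^{2i}(M,N/I^nN)$ and using the isomorphism above shows that $\Ass_A(\Ext_A^{2i}(M,N/I^nN))$ is the image under contraction of a subset of $\Spec(\widehat{A})$ that is independent of $(i,n)$ for $i\ge i_0$, $n\ge n_0$; hence it is itself independent of $(i,n)$ in that range and equals $\Ass_A(\Ext_A^{2i_0}(M,N/I^{n_0}N))$. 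The odd case is identical. There is no genuine obstacle here: the statement is a formal consequence of the already-proven Theorem~\ref{theorem:Q mod f stability} together with the completion lemma, and the only points needing a moment's care are the flat base change identification of the Ext modules and the observation that $\projdim_Q(\widehat{M})$ is automatically finite because $Q$ is regular, which is what licenses the application of Theorem~\ref{theorem:Q mod f stability}.
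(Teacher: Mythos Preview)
Your proof is correct and follows essentially the same route as the paper: pass to the completion $\widehat{A}=Q/({\bf f})$, apply Theorem~\ref{theorem:Q mod f stability} there, and then descend to $A$ via Lemma~\ref{lemma:associated}. The only difference is cosmetic---you make the flat base change isomorphism and the finiteness of $\projdim_Q(\widehat{M})$ explicit, whereas the paper leaves these implicit.
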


\begin{proof}
 Assume $\widehat{A} = Q/({\bf f})$, where $Q$ is a regular local ring and ${\bf f}=f_1,\ldots,f_c$ is a $Q$-regular
 sequence. Then, by applying Theorem~\ref{theorem:Q mod f stability} for the ring $\widehat{A}$, we see that there exist $i_0, n_0
 \geqslant 0$ such that for all $i \geqslant i_0$ and $n \geqslant n_0$, we have
 \begin{align*}
  \Ass_{\widehat{A}}\left(\Ext_A^{2i}(M,N/I^nN)\otimes_A\widehat{A}\right) & = 
  \Ass_{\widehat{A}}\left(\Ext_A^{2i_0}(M,N/I^{n_0}N)\otimes_A\widehat{A}\right), \\
  \Ass_{\widehat{A}}\left(\Ext_A^{2i+1}(M,N/I^nN)\otimes_A\widehat{A}\right) & = 
  \Ass_{\widehat{A}}\left(\Ext_A^{2i_0+1}(M,N/I^{n_0}N)\otimes_A\widehat{A}\right).
 \end{align*}
 The result now follows from Lemma~\ref{lemma:associated}.
\end{proof}

We now give

\begin{proof}[Proof of Lemma~\ref{lemma:polynomial}]
 For each fixed $n \geqslant 0$, consider the short exact sequence of $A$-modules:
 \[ 
  0 \longrightarrow I^n N \longrightarrow N \longrightarrow N/I^n N \longrightarrow 0,
 \]
 which induces an exact sequence of $A$-modules for each $i, n$:
 \[
  \Ext_A^i(M,I^n N) \rightarrow \Ext_A^i(M,N) \rightarrow \Ext_A^i(M,N/I^n N) \rightarrow \Ext_A^{i+1}(M,I^n N).
 \]
 Taking direct sum over $i, n$ and using the naturality of the cohomology operators $t_j$, we obtain an exact sequence of bigraded
 $\mathscr{S} = \mathscr{R}(I)[t_1,\ldots,t_c]$-modules:
 \[
  U \longrightarrow T \longrightarrow V \longrightarrow U(0,1),
 \]
 where
 \begin{align*}
  U &= \bigoplus_{i,n\geqslant 0}U_{i,n} := \bigoplus_{i,n\geqslant 0}\Ext_A^i(M,I^n N), \\
  T &= \bigoplus_{i,n\geqslant 0}T_{i,n} := \bigoplus_{i,n\geqslant 0}\Ext_A^i(M,N), \\
  V &= \bigoplus_{i,n\geqslant 0}V_{i,n} := \bigoplus_{i,n\geqslant 0}\Ext_A^i(M,N/I^n N), \quad\mbox{and}
 \end{align*}
 $U(0,1)$ is same as $U$ but the grading is twisted by $(0,1)$. Setting
 \[
  X := \Image(U \to T), ~ Y := \Image(T \to V) ~\mbox{ and }~ Z := \Image(V \to U(0,1)),
 \]
 we have the following commutative diagram of exact sequences of bigraded $\mathscr{S}$-modules:
 \[
  \xymatrixrowsep{2mm} \xymatrixcolsep{5mm}
  \xymatrix{
	    U \ar[rd] \ar[rr] && T \ar[rd] \ar[rr] && V \ar[rd] \ar[rr] && U(0,1) \\
	    & X \ar[ru] \ar[rd] && Y \ar[ru] \ar[rd] && Z \ar[ru] \ar[rd]     \\
	    0 \ar[ru] && 0 \ar[ru] && 0 \ar[ru] && 0, }
 \]
 which gives the following two short exact sequences of bigraded $\mathscr{S}$-modules:
 \[
  0 \rightarrow X \rightarrow T \rightarrow Y \rightarrow 0 \quad \mbox{and} \quad
  0 \rightarrow Y \rightarrow V \rightarrow Z \rightarrow 0.
 \]
 Now applying $\Hom_A(k,-)$ to these short exact sequences, we get the following exact sequences of bigraded $\mathscr{S}$-modules:
 \begin{align}
  &0 \longrightarrow \Hom_A(k,X) \longrightarrow \Hom_A(k,T) \longrightarrow \Hom_A(k,Y) \longrightarrow C \longrightarrow 0,
													    \label{equation:es 1} \\
  &0 \longrightarrow \Hom_A(k,Y) \longrightarrow \Hom_A(k,V) \longrightarrow D \longrightarrow 0,        \label{equation:es 2}
 \end{align}
 where $C := \Image\left( \Hom_A(k,Y) \longrightarrow \Ext_A^1(k,X) \right)$, \\
 \hspace*{1.03cm} $D := \Image\left( \Hom_A(k,V) \longrightarrow \Hom_A(k,Z) \right)$.
 
 By virtue of Theorem~\ref{theorem:finitely generated}, $U = \bigoplus_{i,n\geqslant 0}\Ext_A^i(M,I^n N)$ is a finitely generated bigraded
 $\mathscr{S}$-module, and hence $X = \Image(U \rightarrow T)$ is so. Therefore $\Hom_A(k,X)$ and $\Ext_A^1(k,X)$ are finitely
 generated bigraded $\mathscr{S}$-modules. Being an $\mathscr{S}$-submodule of $\Ext_A^1(k,X)$, $C$ is
 also a finitely generated  bigraded $\mathscr{S}$-module. Since $\Hom_A(k,X)$ and $\Ext_A^1(k,X)$ are annihilated by
 the maximal ideal of $A$, $\Hom_A(k,X_{i,n})$ and $C_{i,n}$ both are finitely generated 
 $k$-modules, and hence they have finite length as $A$-modules for each $i,n \geqslant 0$. Therefore, by applying the Hilbert-Serre
 Theorem to the bigraded $\mathscr{S}$-modules $\Hom_A(k,X)$ and $C$, we obtain
 \begin{align}
  &\sum_{i,n \geqslant 0} \lambda_A(\Hom_A(k,X_{i,n})) z^i w^n = \dfrac{P_X(z,w)}{{(1-z^2)}^c {(1-w)}^r},\label{equation:series X} \\
  &\sum_{i,n\geqslant 0}\lambda_A(C_{i,n}) z^i w^n = \dfrac{P_C(z,w)}{{(1-z^2)}^c {(1-w)}^r}             \label{equation:series C}
 \end{align}
 for some polynomials $P_X(z,w)$ and $P_C(z,w)$ over $\mathbb{Z}$.
 
 For a fixed $n \geqslant 0$, $\bigoplus_{i \geqslant 0} T_{i,n} = \bigoplus_{i \geqslant 0} \Ext_A^i(M,N)$
 is a finitely generated graded $A[t_1,\ldots,t_c]$-module, and hence $\Hom_A\left(k,\bigoplus_{i\geqslant 0}T_{i,n}\right)$ is also so.
 By similar argument as before, $\lambda_A(\Hom_A(k,T_{i,n}))<\infty$ for each $i \geqslant 0$. Therefore, again by the 
 Hilbert-Serre Theorem, we have for each $n \geqslant 0$:
 \begin{align*}
  \sum_{i\geqslant 0}\lambda_A(\Hom_A(k,T_{i,n})) z^i = \dfrac{P_S(z)}{{(1-z^2)}^c}
 \end{align*}
 for some polynomial $P_S(z)\in \mathbb{Z}[z]$. Multiplying both sides of the above equation by $w^n$, and then taking
 sums over $n \geqslant 0$, we get
 \begin{equation}\label{equation:series T}
  \sum_{i,n \geqslant 0}\lambda_A(\Hom_A(k,T_{i,n}))z^i w^n = \dfrac{P_S(z)}{{(1-z^2)}^c (1-w)} = \dfrac{P_T(z,w)}{{(1-z^2)}^c {(1-w)}^r},
 \end{equation}
 where $P_T(z,w) := P_S(z) {(1-w)}^{r-1} \in \mathbb{Z}[z,w]$.
 
 Now considering \eqref{equation:es 1}, we have an exact sequence of $A$-modules:
 \[
  0 \longrightarrow \Hom_A(k,X_{i,n}) \longrightarrow \Hom_A(k,T_{i,n}) \longrightarrow \Hom_A(k,Y_{i,n}) \longrightarrow C_{i,n}
  \longrightarrow 0
 \]
 for each $i, n \geqslant 0$. So, the additivity of length functions gives
 \[
  \lambda_A(\Hom_A(k,X_{i,n})) - \lambda_A(\Hom_A(k,T_{i,n})) + \lambda_A(\Hom_A(k,Y_{i,n})) - \lambda_A(C_{i,n})=0
 \]
 for each $i,n\geqslant 0$. Multiplying the above equation by $z^i w^n$, then taking sum over $i, n \geqslant 0$, and using
 \eqref{equation:series X}, \eqref{equation:series C} and \eqref{equation:series T}, we obtain
 \begin{equation}\label{equation:series Y}
  \sum_{i,n\geqslant 0}\lambda_A(\Hom_A(k,Y_{i,n}))z^i w^n = \dfrac{P_Y(z,w)}{{(1-z^2)}^c {(1-w)}^r},
 \end{equation}
 where $P_Y(z,w) := - P_X(z,w) + P_T(z,w) + P_C(z,w) \in \mathbb{Z}[z,w]$.
 
 Recall that by Theorem~\ref{theorem:finitely generated}, $U$ is a finitely generated bigraded $\mathscr{S}$-module. Now observe that
 $Z$ is a bigraded submodule of $U(0,1)$. Therefore $Z$ is a finitely generated bigraded $\mathscr{S}$-module, hence $\Hom_A(k,Z)$ is
 so, and hence
 \[
  D = \Image(\Hom_A(k,V) \longrightarrow \Hom_A(k,Z))
 \]
 is also so. Observe that $\lambda_A(D_{i,n})<\infty$ for each $i,n\geqslant 0$.
 Therefore, once again by applying the Hilbert-Serre Theorem, we have
 \begin{equation}\label{equation:series D}
  \sum_{i,n \geqslant 0} \lambda_A(D_{i,n})z^i w^n = \dfrac{P_D(z,w)}{{(1-z^2)}^c {(1-w)}^r}
 \end{equation}
 for some polynomial $P_D(z,w)$ over $\mathbb{Z}$.
 
 Now considering \eqref{equation:es 2}, we have an exact sequence of $A$-modules:
 \[
  0 \longrightarrow \Hom_A(k,Y_{i,n}) \longrightarrow \Hom_A(k,V_{i,n}) \longrightarrow D_{i,n} \longrightarrow 0
 \]
 for each $i,n\geqslant 0$, which gives
 \[\lambda_A(\Hom_A(k,Y_{i,n})) - \lambda_A(\Hom_A(k,V_{i,n})) + \lambda_A(D_{i,n}) = 0.\]
 Multiplying the above equation by $z^i w^n$, then taking sum over $i, n \geqslant 0$, and using
 \eqref{equation:series Y} and \eqref{equation:series D}, we obtain
 \begin{equation*}
  \sum_{i, n \geqslant 0}\lambda_A(\Hom_A(k,V_{i,n}))z^i w^n = \dfrac{P_V(z,w)}{{(1-z^2)}^c {(1-w)}^r},
 \end{equation*}
 where $P_V(z,w) := P_Y(z,w) + P_D(z,w) \in \mathbb{Z}[z,w]$. Therefore it follows that
 \[\lambda_A(\Hom_A(k,V_{2i,n}))\quad\mbox{ and }\quad \lambda_A(\Hom_A(k,V_{2i+1,n}))\]
 are given by polynomials in $i,n$ with rational coefficients for all sufficiently large $(i,n)$, which completes the
 proof of the lemma.
\end{proof}

\section{Asymptotic associated primes: The geometric case}\label{Asymptotic associated primes: The geometric case}

Let $V$ be an affine or projective variety over an algebraically closed field $K$. Let $A$ be the coordinate ring of $V$.
Then $V$ is said to be a {\it locally complete intersection variety} if all its local rings are complete intersection. Thus
\begin{itemize}
 \item in the affine case, $A_\mathfrak{p}$ is a local complete intersection ring for every $\mathfrak{p}\in \Spec(A)$.
 \item in the projective case, $A_{(\mathfrak{p})}$ is a local complete intersection ring for every
 $\mathfrak{p}\in \Proj(A)$. Recall that $A_{(\mathfrak{p})}$ is the degree zero part of the graded ring
 $S^{-1}A$, where $S$ is the collection of all homogeneous elements in $A\smallsetminus \mathfrak{p}$.
\end{itemize}

In this section, we prove the results analogous to Theorems \ref{theorem:Q mod f finiteness} and
\ref{theorem:Q mod f stability} for the coordinate rings of locally complete intersection varieties.
In the affine case, we prove the following general result:

\begin{theorem}\label{theorem:affine case}
 Let $A = Q/\mathfrak{a}$, where $Q$ is a regular ring of finite Krull dimension and $\mathfrak{a} \subseteq Q$ is an ideal so that
 $\mathfrak{a}_{\mathfrak{q}} \subseteq Q_{\mathfrak{q}}$ is generated by a $Q_{\mathfrak{q}}$-regular sequence for each
 $\mathfrak{q} \in \Var(\mathfrak{a})$. Let $M$ and $N$ be two finitely generated $A$-modules, and let $I$ be an ideal of $A$.
 Then the set
 \[
  \bigcup_{i \geqslant 0} \bigcup_{n \geqslant 0} \Ass_A\left( \Ext_A^i(M, N/I^n N) \right) \quad \mbox{is finite}.
 \]
 Moreover, there exist $i_0,n_0 \geqslant 0$ such that for all $i \geqslant i_0$ and $n \geqslant n_0$, we have
 \begin{align*}
  \Ass_A\left(\Ext_A^{2i}(M,N/I^nN)\right) &= \Ass_A\left(\Ext_A^{2 i_0}(M,N/I^{n_0}N)\right), \\
  \Ass_A\left(\Ext_A^{2i+1}(M,N/I^nN)\right) &= \Ass_A\left(\Ext_A^{2 i_0 + 1}(M,N/I^{n_0}N)\right).
 \end{align*}
\end{theorem}

\begin{proof}
 For $x\in A$, set $D(x)=\{\mathfrak{p}\in \Spec(A) : x\notin \mathfrak{p}\}$. As in
 \cite[{\it Proof of Theorem~6.1}]{Pu13}, we have
 \[
  \Spec(A) = D(g_1) \cup \cdots \cup D(g_m) \quad \mbox{for some } g_1,\ldots,g_m \in A
 \]
 such that the localization $A_{g_j}$ by $\{{g_j}^l : l \geqslant 0\}$ has the form $Q_j/\mathfrak{a}_j$ for some regular ring
 $Q_j$ of finite Krull dimension and some ideal $\mathfrak{a}_j$ of $Q_j$ generated by a $Q_j$-regular sequence.
 Clearly, for any $A$-module $E$, we obtain
 \begin{equation}\label{equation:affine 1}
  \Ass_A(E) = \bigcup\{\mathfrak{q}\cap A : \mathfrak{q} \in \Ass_{A_{g_j}}(E_{g_j})\mbox{ for some }j = 1,\ldots,m\}.
 \end{equation}
 
 Since localization $A_{g_j}$ is flat over $A$, we have
 \begin{equation}\label{equation:affine 2}
 \left(\Ext_A^i(M,N/I^n N)\right)_{g_j} = ~\Ext_{A_{g_j}}^i\left(M_{g_j},N_{g_j}/(IA_{g_j})^n N_{g_j}\right)
 \end{equation}
 for all $i,n\geqslant 0$ and $j=1,\ldots,m$. Therefore, from \eqref{equation:affine 1} and \eqref{equation:affine 2}, it is
 enough to prove the result for the ring $A_{g_j}=Q_j/\mathfrak{a}_j$ for each $j$. Note that $Q_j$ is a regular ring
 of finite Krull dimension, and hence $\projdim_{Q_j}(M_{g_j})$ is finite. Therefore the result now follows by
 applying the Theorems \ref{theorem:Q mod f finiteness} and \ref{theorem:Q mod f stability} to each
 $A_{g_j}=Q_j/\mathfrak{a}_j$.
\end{proof}
 
 Now we prove the analogous result to Theorem~\ref{theorem:affine case} in the projective case. Let us fix the following hypothesis:
 
 \begin{hypothesis}\label{hypothesis: the projective locally CI variety}
  Let $K$ be a field not necessarily algebraically closed, and let $Q = K[X_0,X_1,\ldots,X_r]$ be a polynomial ring over $K$,
  where $\deg(X_i) = 1$ for all $i$. Let $\mathfrak{a}$ be a homogeneous ideal of $Q$. Set $A := Q/\mathfrak{a}$. Suppose
  $A_{(\mathfrak{p})}$ is a complete intersection ring for every $\mathfrak{p} \in \Proj(A)$.
  
  Let $\mathfrak{m}$ be the unique maximal homogeneous ideal of $A$. If $E$ is a graded $A$-module, then define the set of relevant
  associated primes of $E$ as
  \[
    {}^*\Ass_A(E) := \Ass_A(E) \smallsetminus \{\mathfrak{m}\}.
  \]
 \end{hypothesis}
 
 \begin{theorem}\label{theorem:projective case}
  With the {\rm Hypothesis~\ref{hypothesis: the projective locally CI variety}}, let $M$ and $N$ be two finitely generated graded
  $A$-modules, and let $I$ be a homogeneous ideal of $A$. Then the set
  \[
    \bigcup_{i \geqslant 0} \bigcup_{n \geqslant 0} {}^*\Ass_A \left( \Ext_A^i(M, N/I^n N) \right) \quad \mbox{is finite}.
  \]
  Moreover, there exist $i_0,n_0 \geqslant 0$ such that for all $i \geqslant i_0$ and $n \geqslant n_0$, we have
  \begin{align*}
   {}^*\Ass_A\left(\Ext_A^{2i}(M,N/I^nN)\right) &= {}^*\Ass_A\left(\Ext_A^{2 i_0}(M,N/I^{n_0}N)\right), \\
   {}^*\Ass_A\left(\Ext_A^{2i+1}(M,N/I^nN)\right) &= {}^*\Ass_A\left(\Ext_A^{2 i_0 + 1}(M,N/I^{n_0}N)\right).
  \end{align*}
 \end{theorem}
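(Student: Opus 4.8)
The plan is to reduce the projective statement to the affine one (Theorem~\ref{theorem:affine case}) by dehomogenizing over a finite affine cover of $\Proj(A)$.

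First I would fix a good cover. Since $A$ is generated over $A_0=K$ by $A_1$, a homogeneous prime $\neq\mathfrak{m}$ does not contain $A_+$, so the basic open sets $D_+(\ell)$ with $\ell\in A_1$ cover $\Proj(A)$; by quasi-compactness, $\Proj(A) = D_+(g_1)\cup\cdots\cup D_+(g_s)$ with each $g_j\in A_1$. Write $B_j := A_{(g_j)}$, the degree-zero subring of the $\mathbb{Z}$-graded ring $A_{g_j}$. Because $g_j$ is a unit of degree one in $A_{g_j}$, multiplication by $g_j^n$ identifies $(A_{g_j})_0$ with $(A_{g_j})_n$ for every $n\in\mathbb{Z}$, so $A_{g_j}\cong B_j[t,t^{-1}]$ as graded rings; hence $A_{g_j}$ is free over $B_j$, and $(-)_0$ is an equivalence from graded $A_{g_j}$-modules to $B_j$-modules with quasi-inverse $(-)\otimes_{B_j}A_{g_j}$. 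Moreover $B_j$ is a finitely generated $K$-algebra, so $B_j = Q_j/\mathfrak{a}_j$ for a polynomial ring $Q_j$ over $K$ (regular, of finite Krull dimension), and each local ring of $B_j$ is $A_{(\mathfrak{p})}$ for some $\mathfrak{p}\in D_+(g_j)$, hence a complete intersection; thus $B_j$ satisfies the hypotheses of Theorem~\ref{theorem:affine case}.

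Next I would establish three compatibilities with the functor $(-)_{(g_j)} = ((-)_{g_j})_0$. (i) Setting $J_j := (IA_{g_j})\cap B_j$, the fact that $\deg g_j = 1$ gives $(I^nA_{g_j})\cap B_j = J_j^n$, whence $(N/I^nN)_{(g_j)}\cong N_{(g_j)}/J_j^n N_{(g_j)}$ for all $n$. (ii) For finitely generated graded $M,N$, flat base change gives $\Ext_A^i(M,N)_{g_j}\cong\Ext_{A_{g_j}}^i(M_{g_j},N_{g_j})$; resolving $M_{(g_j)}$ by finitely generated free $B_j$-modules and tensoring with $A_{g_j}$ shows $\Ext_{A_{g_j}}^i(M_{g_j},N_{g_j})\cong\Ext_{B_j}^i(M_{(g_j)},N_{(g_j)})\otimes_{B_j}A_{g_j}$, whose degree-zero part is $\Ext_{B_j}^i(M_{(g_j)},N_{(g_j)})$. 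Combining (i) and (ii), $\left(\Ext_A^i(M,N/I^nN)\right)_{(g_j)}\cong\Ext_{B_j}^i\!\left(M_{(g_j)}, N_{(g_j)}/J_j^n N_{(g_j)}\right)$. (iii) For a finitely generated graded $A$-module $E$, every associated prime is homogeneous, and a homogeneous prime $\neq\mathfrak{m}$ lies in $\Proj(A)$, hence in some $D_+(g_j)$; the usual bijection between homogeneous primes in $D_+(g_j)$ and primes of $B_j$, given by $\mathfrak{p}\mapsto\mathfrak{p}_{(g_j)} := (\mathfrak{p}A_{g_j})\cap B_j$, restricts to a bijection $\Ass_A(E)\cap D_+(g_j)\leftrightarrow\Ass_{B_j}(E_{(g_j)})$. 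Consequently ${}^*\Ass_A(E)$ is the union, over $j$, of the preimages of $\Ass_{B_j}(E_{(g_j)})$, and $\mathfrak{p}\mapsto(j,\mathfrak{p}_{(g_j)})$ embeds ${}^*\Ass_A(E)$ into $\bigsqcup_{j}\Ass_{B_j}(E_{(g_j)})$.

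With these identifications I would apply Theorem~\ref{theorem:affine case} to each $B_j$ with the modules $M_{(g_j)}, N_{(g_j)}$ and the ideal $J_j$. For finiteness, $\bigcup_{i,n}{}^*\Ass_A(\Ext_A^i(M,N/I^nN))$ embeds into the finite union $\bigsqcup_j\bigcup_{i,n}\Ass_{B_j}(\Ext_{B_j}^i(M_{(g_j)},N_{(g_j)}/J_j^n N_{(g_j)}))$ of finite sets. For stability, pick $(i_j,n_j)$ as provided by Theorem~\ref{theorem:affine case} for each $j$ and put $i_0 := \max_j i_j$, $n_0 := \max_j n_j$; then for all $i\ge i_0$, $n\ge n_0$ and every $j$ the sets $\Ass_{B_j}(\Ext_{B_j}^{2i}(M_{(g_j)},N_{(g_j)}/J_j^n N_{(g_j)}))$ and $\Ass_{B_j}(\Ext_{B_j}^{2i+1}(M_{(g_j)},N_{(g_j)}/J_j^n N_{(g_j)}))$ are independent of $(i,n)$ in this range, and transporting these equalities through the bijections of (iii) yields the asserted stability of ${}^*\Ass_A(\Ext_A^{2i}(M,N/I^nN))$ and ${}^*\Ass_A(\Ext_A^{2i+1}(M,N/I^nN))$. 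The only substantive point — and the one place the projective hypothesis really enters — is the dehomogenization in the middle paragraphs: realizing each $A_{(g_j)}$ as a ring covered by Theorem~\ref{theorem:affine case} and checking that Ext, the powers of $I$, and relevant associated primes all pass correctly to the homogeneous localizations; the rest is bookkeeping.
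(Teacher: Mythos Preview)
Your argument is correct, but it takes a different route from the paper's own proof. The paper works with the \emph{full} homogeneous localizations $A_{g_j}$ rather than their degree-zero parts: citing \cite[{\it Proof of Theorem~6.3}]{Pu13}, it chooses homogeneous $g_1,\ldots,g_m$ so that each $A_{g_j}$ is itself of the form $Q_j/\mathfrak{a}_j$ with $Q_j$ regular of finite Krull dimension and $\mathfrak{a}_j$ generated by a $Q_j$-regular sequence, then records the identity
\[
  {}^*\Ass_A(E)=\bigcup_j\{\mathfrak{q}\cap A:\mathfrak{q}\in\Ass_{A_{g_j}}(E_{g_j})\}
\]
and applies Theorems~\ref{theorem:Q mod f finiteness} and~\ref{theorem:Q mod f stability} directly to each $A_{g_j}$. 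No passage to degree zero, no verification of how Ext or ideal powers behave under $(-)_{(g_j)}$, is needed.

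Your approach instead dehomogenizes: you pass to $B_j=A_{(g_j)}$, observe that its local rings are the $A_{(\mathfrak{p})}$ (hence locally complete intersection), and invoke the already-packaged affine Theorem~\ref{theorem:affine case}. This is conceptually neat---the projective case genuinely reduces to the affine one---but it costs you the compatibility checks (i)--(iii), which the paper avoids entirely. Conversely, the paper's shortcut leans on the external presentation result from \cite{Pu13} to put each $A_{g_j}$ into ``regular modulo regular sequence'' form; your route does not need that, since Theorem~\ref{theorem:affine case} only requires $B_j$ to be locally a complete intersection, which is immediate from the hypothesis on $A_{(\mathfrak{p})}$.
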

 
 \begin{proof}
  For a homogeneous element $x\in A$, set ${}^*D(x) = \{\mathfrak{p} \in \Proj(A) : x \notin \mathfrak{p}\}$. As in
 \cite[{\it Proof of Theorem~6.3}]{Pu13}, we have
 \[
  \Proj(A) = {}^*D(g_1) \cup \cdots \cup {}^*D(g_m) \quad \mbox{for some homogeneous } g_1,\ldots,g_m \in A
 \]
 such that $A_{g_j} = Q_j/\mathfrak{a}_j$ for some regular ring $Q_j$ of finite Krull dimension and some homogeneous
 ideal $\mathfrak{a}_j$ of $Q_j$ generated by a $Q_j$-regular sequence. Clearly, for any graded $A$-module $E$, we obtain
 \[
  {}^*\Ass_A(E) = \bigcup\{\mathfrak{q}\cap A : \mathfrak{q} \in \Ass_{A_{g_j}}(E_{g_j})\mbox{ for some }j=1,\ldots,m\}.
 \]
 Similarly, as in the proof of Theorem~\ref{theorem:affine case}, the result follows by applying the
 Theorems~\ref{theorem:Q mod f finiteness} and \ref{theorem:Q mod f stability} to each $A_{g_j} = Q_j/\mathfrak{a}_j$.
 \end{proof}
 
\section{Support varieties}\label{Support varieties}

Let $(A,\mathfrak{m})$ be a local complete intersection ring of codimension $c$. Let $M$ and $N$ be two finitely
generated $A$-modules. The complexity of the pair $(M,N)$ is defined to be the number
\[
 \cx_A(M,N) = \inf\left\{b\in\mathbb{N} ~\middle|~ \limsup_{n\rightarrow\infty}\dfrac{\mu(\Ext_A^n(M,N))}{n^{b-1}}<\infty \right\},
\]
where $\mu(D)$ denotes the minimal number of generators of a finitely generated $A$-module $D$. 

In this section, we prove the following theorem:

\begin{theorem}\label{theorem:Stability of complexity}
 Let $(A,\mathfrak{m})$ be a local complete intersection ring. Let $M$ and $N$ be two finitely generated $A$-modules, and let
 $I$ be an ideal of $A$. Then
 \[
  \cx_A(M,N/I^jN)\quad\mbox{is constant for all $j\gg 0$.}
 \]
\end{theorem}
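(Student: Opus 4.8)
The plan is to pass to the completion so that Theorem~\ref{theorem:finitely generated} becomes applicable, and then to read the complexity of each $N/I^nN$ off a single two-variable rational generating function, produced exactly as in the proof of Lemma~\ref{lemma:polynomial}. For the reduction: since $A$ is a local complete intersection, $\widehat A = Q/({\bf f})$ with $Q$ a regular local ring of finite Krull dimension and ${\bf f}=f_1,\dots,f_c$ a $Q$-regular sequence; in particular $\projdim_Q(\widehat M)$ is finite. Because $\widehat A$ is faithfully flat over $A$ one has $\Ext_A^i(M,N/I^nN)\otimes_A\widehat A\cong\Ext_{\widehat A}^i(\widehat M,\widehat N/(I\widehat A)^n\widehat N)$ for all $i,n$, and the minimal number of generators of a finitely generated module is unaffected by $-\otimes_A\widehat A$; hence $\cx_A(M,N/I^nN)=\cx_{\widehat A}(\widehat M,\widehat N/(I\widehat A)^n\widehat N)$ for every $n$, and it suffices to prove the theorem when $A=Q/({\bf f})$ with $Q$ regular local of finite Krull dimension.

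Next I would rerun the proof of Lemma~\ref{lemma:polynomial} \emph{verbatim}, but applying the right-exact functor $-\otimes_A k$ in place of the left-exact functor $\Hom_A(k,-)$, so that the two ``correction modules'' there become images of the natural maps out of $\Tor_1^A(k,-)$ rather than into $\Ext_A^1(k,-)$. With $U=\bigoplus_{i,n\ge 0}\Ext_A^i(M,I^nN)$, $X=\Image(U\to T)$ and $Z=\Image(V\to U(0,1))$ as in that proof, Theorem~\ref{theorem:finitely generated} makes $U$ — and hence $X$, $Z$ and $\Tor_1^A(k,Z)$ — finitely generated bigraded modules over $\overline{\mathscr S}:=\mathscr S/\mathfrak m\mathscr S$. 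Note that $\overline{\mathscr S}=\bigl(\bigoplus_{n\ge 0}I^n/\mathfrak mI^n\bigr)[t_1,\dots,t_c]$ is a bigraded quotient of $k[Y_1,\dots,Y_r][t_1,\dots,t_c]$ with $\deg Y_l=(1,0)$ and $\deg t_j=(0,2)$, for a suitable $r\ge 1$. Every bigraded component of each of the modules just listed is killed by $\mathfrak m$ and finitely generated over $A$, hence has finite length; applying the Hilbert--Serre theorem to each of them (the ``$T$''-term $\bigoplus_{i,n}\Ext_A^i(M,N)$ being handled for fixed $n$ by \ref{para:Gulliksen}), and then invoking additivity of length bidegree-by-bidegree along the two resulting exact sequences of $A$-modules exactly as in Lemma~\ref{lemma:polynomial}, one obtains
\[
\sum_{i,n\ge 0}\mu\!\left(\Ext_A^i(M,N/I^nN)\right)z^iw^n=\frac{P(z,w)}{(1-z^2)^c(1-w)^r}
\]
for some $P(z,w)\in\mathbb Z[z,w]$.

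Finally I would read off and stabilize the complexity. Write $P(z,w)=\sum_j p_j(z)w^j$ and set $q_n(z):=\sum_j p_j(z)\binom{n-j+r-1}{r-1}$, so that $\sum_{i\ge 0}\mu(\Ext_A^i(M,N/I^nN))z^i=q_n(z)/(1-z^2)^c$; for $n>\deg_w P$ every coefficient of the polynomial $q_n(z)$ is itself a polynomial in $n$, and $\deg_z q_n\le\deg_z P$ for all $n$. Since $(1-z^2)^c$ has poles only at $z=\pm1$, a partial-fraction computation identifies $\mu(\Ext_A^i(M,N/I^nN))$, for $i\gg 0$, with a quasi-polynomial in $i$ of degree $\max\{\,c-\operatorname{ord}_{z=1}q_n,\ c-\operatorname{ord}_{z=-1}q_n\,\}-1$, so that
\[
\cx_A(M,N/I^nN)=\max\bigl\{\,0,\ c-\operatorname{ord}_{z=1}q_n,\ c-\operatorname{ord}_{z=-1}q_n\,\bigr\}.
\]
It then remains to check that $\operatorname{ord}_{z=1}q_n$ and $\operatorname{ord}_{z=-1}q_n$ are eventually constant in $n$. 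For each of the finitely many $s$ with $0\le s\le\deg_z P$, the quantity $q_n^{(s)}(1)$ is, for $n\gg 0$, a polynomial in $n$, and such a polynomial is either identically zero or nonzero for all large $n$; hence for $n\gg 0$ the set $\{s:q_n^{(s)}(1)\ne 0\}$, and likewise $\{s:q_n^{(s)}(-1)\ne 0\}$, is independent of $n$, so $\operatorname{ord}_{z=1}q_n=\min\{s:q_n^{(s)}(1)\ne 0\}$ and $\operatorname{ord}_{z=-1}q_n=\min\{s:q_n^{(s)}(-1)\ne 0\}$ (both $=+\infty$ if $q_n\equiv 0$) are eventually constant. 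This gives the theorem.

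The heart of the matter is the second step. Unlike $\bigoplus_{i,n}\Ext_A^i(M,I^nN)$ — which, being finitely generated over $\mathscr S$ by Theorem~\ref{theorem:finitely generated}, settles the $I^nN$-analogue \cite[Theorem~7.1]{Pu13} at once — the family $\bigoplus_{i,n}\Ext_A^i(M,N/I^nN)$ need not be finitely generated over $\mathscr S$, which forces one through the two exact sequences of Lemma~\ref{lemma:polynomial} together with careful bookkeeping of the finite-length bigraded slices. It is also essential to work with $-\otimes_A k$ rather than with $\Hom_A(k,-)$, since it is the minimal number of generators, not the socle dimension, that enters the definition of $\cx_A$; once the rational form of the generating function is in hand, the remaining analysis of how the pole orders of the one-variable slices vary with $n$ is elementary, and it is precisely there that the stabilization in $n$ emerges.
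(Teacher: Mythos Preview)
Your argument is correct, but it follows a genuinely different route from the paper's own proof. The paper does not pass through the two-variable Hilbert series of $\mu\bigl(\Ext_A^i(M,N/I^nN)\bigr)$; instead it works with the short exact sequences
\[
0\to I^jN/I^{j+1}N\to N/I^{j+1}N\to N/I^jN\to 0
\]
for each $j$, obtains an exact sequence of graded $A[t_1,\dots,t_c]$-modules linking $V_{j+1}:=\Ext_A^\star(M,N/I^{j+1}N)$ to $V_j$ with ``error terms'' inside $U_j:=\Ext_A^\star(M,I^jN/I^{j+1}N)$, tensors with $k$, and then reads the complexity directly as $\dim_T(V_j\otimes_A k)$ over $T=k[t_1,\dots,t_c]$. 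Because $\bigoplus_j U_j$ is finitely generated over $\mathscr R(I)[t_1,\dots,t_c]$, Lemma~\ref{lemma:stability of dim} stabilizes $\dim_T$ of the relevant subquotients $Z_j\otimes_A k$; a short monotonicity argument then forces $\dim_T(V_j\otimes_A k)$ to stabilize. Your approach, by contrast, replays Lemma~\ref{lemma:polynomial} with $-\otimes_A k$ in place of $\Hom_A(k,-)$ and the filtration $0\to I^nN\to N\to N/I^nN\to 0$, producing the rational form
\[
\sum_{i,n\ge 0}\mu\bigl(\Ext_A^i(M,N/I^nN)\bigr)z^iw^n=\dfrac{P(z,w)}{(1-z^2)^c(1-w)^r},
\]
from which you extract $\cx$ as the maximal pole order of the $w^n$-slices and stabilize via the polynomial dependence of $q_n^{(s)}(\pm 1)$ on $n$. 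This is more computational but also more informative: it yields an explicit rational two-variable Hilbert series, and it stays closer to the machinery already built in Section~\ref{Asymptotic associated primes: Stability}. The paper's support-variety argument is shorter and more geometric, trading the series manipulation for a clean dimension-comparison. One small wording issue: $U$, $X$, $Z$ are finitely generated over $\mathscr S$, not over $\overline{\mathscr S}=\mathscr S/\mathfrak m\mathscr S$; it is only after applying $-\otimes_A k$ or $\Tor_1^A(k,-)$ that the resulting modules live over $\overline{\mathscr S}$. This does not affect the validity of your argument.
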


As in \cite[Section~7.2]{Pu13}, we may assume that our local complete intersection $A$
\begin{itemize}
 \item[(1)] is complete. So $A=Q/( f_1,\ldots,f_c)$, where $(Q,\mathfrak{n})$ is a regular local ring and
       ${\bf f}=f_1,\ldots,f_c\in\mathfrak{n}^2$ is a $Q$-regular sequence.
 \item[(2)] has an algebraically closed residue field $k$.
\end{itemize}

Let $U$ and $V$ be two finitely generated  $A$-modules. Let 
\[
 \Ext_A^{\star}(U,V) := \bigoplus_{i\geqslant 0}\Ext_A^i(U,V)
\]
be the total Ext module of $U,V$ over the ring $A[t_1,\ldots,t_c]$ of cohomology operators $t_j$ defined by ${\bf f}$,
where $\deg(t_j) = 2$ for all $j = 1,\ldots,c$. Set 
\[
 \mathcal{C}(U,V) := \Ext_A^\star(U,V)\otimes_A k.
\]
Since $\Ext_A^{\star}(U,V)$ is a finitely generated graded $A[t_1,\ldots,t_c]$-module, $\mathcal{C}(U,V)$ is a finitely
generated graded module over 
\[
 \overline{T} := A[t_1,\ldots,t_c] \otimes_A k = k[t_1,\ldots,t_c].
\]

Define the support variety $\mathscr{V}(U,V)$ of $U,V$ as the zero set in $k^c$ of the annihilator of
$\mathcal{C}(U,V)$ in $\overline{T}$, that is
\[
 \mathscr{V}(U,V) := \{(b_1,\ldots,b_c)\in k^c:P(b_1,\ldots,b_c)=0 ~\forall~P\in\ann_{\overline{T}}(\mathcal{C}(U,V))\} \cup \{0\}.
\]
By \cite[2.4(2)]{Av00}, we have 
\begin{center}
 ($@$)\hfill $\cx_A(U,V) = \dim(\mathscr{V}(U,V)) = \dim_{\overline{T}}(\mathcal{C}(U,V))$. \hfill \;
\end{center}

Before proving the Theorem~\ref{theorem:Stability of complexity}, we give the following well-known lemma:

\begin{lemma}\label{lemma:stability of dim}
 Let $\mathscr{R} = \bigoplus_{n \geqslant 0} \mathscr{R}_n$ be a Noetherian standard $\mathbb{N}$-graded ring, and let
 $\mathscr{M} = \bigoplus_{n \geqslant 0} \mathscr{M}_n$ be a finitely generated $\mathbb{N}$-graded $\mathscr{R}$-module. Then there
 exists $j_0 \geqslant 0$ such that
 \[
  \dim_{\mathscr{R}_0}(\mathscr{M}_j) = \dim_{\mathscr{R}_0}(\mathscr{M}_{j_0})\quad\mbox{for all }j\geqslant j_0.
 \]
\end{lemma}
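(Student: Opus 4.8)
The plan is to reduce the statement to the stabilization of an ascending chain of ideals in the Noetherian ring $\mathscr{R}_0$. The key point is that, since $\mathscr{R}$ is \emph{standard} graded over $\mathscr{R}_0$ (so $\mathscr{R}=\mathscr{R}_0[\mathscr{R}_1]$ with $\mathscr{R}_1$ a finitely generated $\mathscr{R}_0$-module) and $\mathscr{M}$ is a finitely generated graded $\mathscr{R}$-module, one has $\mathscr{M}_{j+1}=\mathscr{R}_1\,\mathscr{M}_j$ for all sufficiently large $j$. First I would fix homogeneous generators $m_1,\dots,m_s$ of $\mathscr{M}$ of degrees $d_1,\dots,d_s$, set $d=\max_k d_k$, and check that for $j\ge d$ every element of $\mathscr{M}_{j+1}$ is a finite sum $\sum_k a_k m_k$ with $a_k\in\mathscr{R}_{j+1-d_k}$, where $j+1-d_k\ge 1$, so that $a_k=\sum b_{k\ell}c_{k\ell}$ with $b_{k\ell}\in\mathscr{R}_1$ and $c_{k\ell}\in\mathscr{R}_{j-d_k}$; hence $a_k m_k\in\mathscr{R}_1\,\mathscr{M}_j$. (Here I also use that each $\mathscr{M}_j$ is a finitely generated $\mathscr{R}_0$-module, which follows from $\mathscr{R}$ being Noetherian and standard graded.)

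Next I would deduce that $\ann_{\mathscr{R}_0}(\mathscr{M}_j)\subseteq\ann_{\mathscr{R}_0}(\mathscr{M}_{j+1})$ for all $j\ge d$: if $a\in\mathscr{R}_0$ annihilates $\mathscr{M}_j$, then for any $x\in\mathscr{M}_{j+1}$ written as $x=\sum_i r_i y_i$ with $r_i\in\mathscr{R}_1$ and $y_i\in\mathscr{M}_j$, we get $ax=\sum_i r_i(ay_i)=0$, so $a$ annihilates $\mathscr{M}_{j+1}$. Thus $\bigl(\ann_{\mathscr{R}_0}(\mathscr{M}_j)\bigr)_{j\ge d}$ is an ascending chain of ideals in the Noetherian ring $\mathscr{R}_0$, hence stabilizes: there is some $j_0\ge d$ with $\ann_{\mathscr{R}_0}(\mathscr{M}_j)=\ann_{\mathscr{R}_0}(\mathscr{M}_{j_0})$ for all $j\ge j_0$.

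Finally, since each $\mathscr{M}_j$ is a finitely generated $\mathscr{R}_0$-module, $\dim_{\mathscr{R}_0}(\mathscr{M}_j)=\dim\bigl(\mathscr{R}_0/\ann_{\mathscr{R}_0}(\mathscr{M}_j)\bigr)$, and by the previous step this is constant for $j\ge j_0$, which is the assertion. There is no genuinely hard step here; the only thing to be careful about is the passage $\mathscr{M}_{j+1}=\mathscr{R}_1\,\mathscr{M}_j$, which really does require both the finite generation of $\mathscr{M}$ and the standardness of $\mathscr{R}$ — without standardness the function $j\mapsto\dim_{\mathscr{R}_0}(\mathscr{M}_j)$ need not stabilize. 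Alternatively, one could phrase the middle step geometrically: the surjections $\mathscr{R}_1\otimes_{\mathscr{R}_0}\mathscr{M}_j\twoheadrightarrow\mathscr{M}_{j+1}$ give a descending chain of closed subsets $\mathrm{Supp}_{\mathscr{R}_0}(\mathscr{M}_j)\supseteq\mathrm{Supp}_{\mathscr{R}_0}(\mathscr{M}_{j+1})$ of the Noetherian space $\Spec(\mathscr{R}_0)$, which stabilizes; but the annihilator-chain argument above is the most direct route.
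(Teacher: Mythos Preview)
Your proof is correct and follows exactly the same route as the paper's: show that $\ann_{\mathscr{R}_0}(\mathscr{M}_j)$ stabilizes for large $j$ and deduce the stability of dimensions. The paper's proof is terse and simply asserts the stabilization of annihilators, whereas you supply the details (the identity $\mathscr{M}_{j+1}=\mathscr{R}_1\mathscr{M}_j$ for $j\gg 0$ and the resulting ascending chain argument), which is a welcome elaboration.
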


\begin{proof}
 Since $\mathscr{M}$ is a finitely generated $\mathbb{N}$-graded module over a Noetherian standard $\mathbb{N}$-graded
 ring $\mathscr{R}$, there exists $j_0 \geqslant 0$ such that 
 \[ 
 \ann_{\mathscr{R}_0}(\mathscr{M}_j) = \ann_{\mathscr{R}_0}(\mathscr{M}_{j_0})\quad\mbox{for all }j\geqslant j_0,
 \]
 and hence $\dim_{\mathscr{R}_0}(\mathscr{M}_j) = \dim_{\mathscr{R}_0}(\mathscr{M}_{j_0})$ for all $j\geqslant j_0$.
\end{proof}

\begin{para}\label{observation}
 Since $\gr_I(N) = \bigoplus_{j \geqslant 0}I^jN/I^{j+1}N$~ is a finitely generated graded $\mathscr{R}(I)$-module, by
 Theorem~\ref{theorem:finitely generated},
 \[
  \bigoplus_{j \geqslant 0} \Ext_A^{\star}(M,I^jN/I^{j+1}N) = \bigoplus_{i \geqslant 0} \bigoplus_{j \geqslant 0} \Ext_A^i(M,I^jN/I^{j+1}N)
 \]
 is a finitely generated bigraded $\mathscr{R}(I)[t_1,\ldots,t_c]$-module, and hence 
 \[
  \bigoplus_{j \geqslant 0}\Ext_A^\star(M,I^jN/I^{j+1}N) \otimes_A k
 \]
 is a finitely generated graded $\mathscr{R}(I)[t_1,\ldots,t_c] \otimes_A k = F(I)[t_1,\ldots,t_c]$-module, where
 $F(I)$ is the fiber cone of $I$ which is a finitely generated $k$-algebra. Writing
 \[
  F(I)[t_1,\ldots,t_c] = k[x_1,\ldots,x_m][t_1,\ldots,t_c] = \overline{T}[x_1,\ldots,x_m],
 \]
 and using Lemma~\ref{lemma:stability of dim}, we have
 \[ 
  \dim_{\overline{T}}\left(\Ext_A^\star(M,I^jN/I^{j+1}N) \otimes_A k\right) \quad \mbox{is constant for all }j \gg 0,
 \]
 and hence $\cx_A(M,I^jN/I^{j+1}N)$ is constant for all $j \gg 0$, by ($@$).
\end{para}

Now we give

\begin{proof}[Proof of Theorem~\ref{theorem:Stability of complexity}]
 Fix $j \geqslant 0$. Consider the short exact sequence of $A$-modules
 \[
  0 \longrightarrow I^jN/I^{j+1}N \longrightarrow N/I^{j+1}N \longrightarrow N/I^jN \longrightarrow 0,
 \]
 which induces the following exact sequence of $A$-modules for each $i$:
 \begin{align*}
				              \Ext_A^{i-1}(M,N/I^jN) &\longrightarrow \\
  \Ext_A^i(M,I^jN/I^{j+1}N) \longrightarrow \Ext_A^i(M,N/I^{j+1}N) \longrightarrow \Ext_A^i(M,N/I^jN) &\longrightarrow \\
  \Ext_A^{i+1}(M,I^jN/I^{j+1}N). \quad\quad\quad\quad\quad\quad\quad ~~~~~~~~~~~~~~~~~~~~~~~~~~~~~~~~~~~~~~ &
 \end{align*}
 Taking direct sum over $i$ and setting
 \[
   U_j := \bigoplus_{i \geqslant 0}\Ext_A^i(M,I^jN/I^{j+1}N)\quad\mbox{and}\quad V_j := \bigoplus_{i \geqslant 0}\Ext_A^i(M,N/I^jN),
 \]
 we obtain an exact sequence of $A[t_1,\ldots,t_c]$-modules:
 \[ 
   V_j(-1) \stackrel{\varphi_1}{\longrightarrow} U_j \stackrel{\varphi_2}{\longrightarrow}
   V_{j+1} \stackrel{\varphi_3}{\longrightarrow} V_j  \stackrel{\varphi_4}{\longrightarrow} U_j(1). 
 \]
 Set $Z_j := \Image(\varphi_1)$, $X_j := \Image(\varphi_2)$ and $Y_j := \Image(\varphi_3)$.
 Thus we have the following commutative diagram of exact sequences:
 \[
  \xymatrixrowsep{2mm} \xymatrixcolsep{4mm}
  \xymatrix{
	      V_j(-1) \ar[rd] \ar[rr] && U_j \ar[rd] \ar[rr] && V_{j+1} \ar[rd] \ar[rr] && V_j \ar[rd] \ar[rr] && U_j(1) \\
	      & Z_j \ar[ru] \ar[rd] && X_j \ar[ru] \ar[rd] && Y_j \ar[ru] \ar[rd] && Z_j(1) \ar[ru] \ar[rd] \\
	      0 \ar[ru] && 0 \ar[ru] && 0 \ar[ru] && 0 \ar[ru] && 0. }
 \]
 Now consider the short exact sequences of $A[t_1,\ldots,t_c]$-modules:
 \[
  0 \rightarrow X_j \rightarrow V_{j+1} \rightarrow Y_j \rightarrow 0 \quad\mbox{and}\quad
  0 \rightarrow Y_j \rightarrow V_j \rightarrow Z_j(1) \rightarrow 0.
 \]
 Tensoring these sequences with $k$ over $A$, we get the following exact sequences of $\overline{T} = k[t_1,\ldots,t_c]$-modules:
 \begin{align*}
  X_j\otimes_A k \stackrel{\Phi_j}{\longrightarrow} &V_{j+1}\otimes_A k \longrightarrow Y_j\otimes_A k \longrightarrow 0,\\
  Y_j\otimes_A k \stackrel{\Psi_j}{\longrightarrow} &V_j\otimes_A k  \longrightarrow Z_j(1)\otimes_A k \longrightarrow 0.
 \end{align*}
 Now for each $j\geqslant 0$, set $X'_j:=\Image(\Phi_j)$ and $Y'_j:=\Image(\Psi_j)$ to get the following short exact
 sequences of $\overline{T}$-modules:
 \begin{align}
  0 \longrightarrow X'_j \longrightarrow & V_{j+1}\otimes_A k \longrightarrow Y_j\otimes_A k \longrightarrow 0,
											      \label{equation:support 1}\\
  0 \longrightarrow Y'_j \longrightarrow & V_j\otimes_A k  \longrightarrow Z_j(1)\otimes_A k \longrightarrow 0.
											      \label{equation:support 2}
 \end{align}
 
 From Section~\ref{observation}, $\bigoplus_{j \geqslant 0} U_j$ is a finitely generated graded
 $\mathscr{R}(I) [t_1, \ldots, t_c]$-module, and hence its submodule $\bigoplus_{j \geqslant 0} Z_j$ is also so.
 Therefore $\bigoplus_{j \geqslant 0} (Z_j\otimes_A k)$ is a finitely generated graded module over
 \[\mathscr{R}(I)[t_1,\ldots,t_c]\otimes_A k = F(I)[t_1,\ldots,t_c] = \overline{T}[x_1,\ldots,x_m].\]
 Therefore, by Lemma~\ref{lemma:stability of dim}, $\dim_{\overline{T}}(Z_j \otimes_A k) = z$ for all sufficiently large $j$, where $z$
 is some constant. Now considering the short exact sequences \eqref{equation:support 1} and \eqref{equation:support 2}, we obtain that
 \begin{align}
  \dim_{\overline{T}}(V_{j+1}\otimes_A k) &= \max\{ \dim_{\overline{T}}(X'_j), \dim_{\overline{T}}(Y_j\otimes_A k)\},
											  \label{equation:support dim 1} \\
  \dim_{\overline{T}}(V_j\otimes_A k) &= \max\{ \dim_{\overline{T}}(Y'_j), z \} \geqslant z  \label{equation:support dim 2}
 \end{align}
 for all sufficiently large $j$, say $j\geqslant j_0$.
 
 Note that $\dim_{\overline{T}}(V_j \otimes_A k) = \cx_A(M,N/I^jN)$ for all $j \geqslant 0$. Thus it is enough to prove that the stability
 of $\dim_{\overline{T}}(V_j \otimes_A k)$ holds for all sufficiently large $j$.
 
 If $\dim_{\overline{T}}(V_j \otimes_A k) = z$ for all $j \geqslant j_0$, then we are done. Otherwise there exists some $j \geqslant j_0$
 such that $\dim_{\overline{T}}(V_j \otimes_A k) > z$, and hence for this $j$ we have
 \[
  \dim_{\overline{T}}(V_j \otimes_A k) = \dim_{\overline{T}}(Y'_j) \leqslant \dim_{\overline{T}}(Y_j \otimes_A k)
  \leqslant \dim_{\overline{T}}(V_{j+1} \otimes_A k).
 \]
 First equality above occurs from \eqref{equation:support dim 2}, second inequality occurs because $Y'_j$ is a
 quotient module of $Y_j\otimes_A k$, and the last inequality occurs from \eqref{equation:support dim 1}.
 
 Note that $\dim_{\overline{T}}(V_{j+1} \otimes_A k) > z$. So, by applying a similar procedure, we have
 \[
  \dim_{\overline{T}}(V_{j+1} \otimes_A k) \leqslant \dim_{\overline{T}}(V_{j+2} \otimes_A k).
 \]
 In this way, we have a bounded non-decreasing sequence
 \[ 
  \dim_{\overline{T}}(V_j \otimes_A k) \leqslant \dim_{\overline{T}}(V_{j+1} \otimes_A k) \leqslant \dim_{\overline{T}}(V_{j+2} \otimes_A k)
  \leqslant \cdots \leqslant \dim(\overline{T}) < \infty,
 \]
 which eventually stabilize somewhere, and hence the required stability holds.
\end{proof}

\section*{Acknowledgements}
 We thank the anonymous reviewer for his/her careful reading of our manuscript and many valuable comments.
 The first author would like to thank NBHM, DAE, Govt. of India for providing financial support for this study.

 Department of Mathematics\\
 Indian Institute of Technology Bombay\\
 Powai\\
 Mumbai 400076\\
 India\\
 E-mail: dipankar@math.iitb.ac.in\\
 \hspace*{1.35cm} tputhen@math.iitb.ac.in
 

\begin{thebibliography}{AAAA}
\bibitem{Av00} L. L. Avramov and R.-O. Buchweitz. Support varieties and cohomology over complete intersections. {\it Invent. Math.} {\bf 142} (2000), 285-318.
\bibitem{Br79} M. Brodmann. Asymptotic stability of $\Ass(M/I^n M)$. {\it Proc. Amer. Math. Soc.} {\bf 74} (1979), 16-18.
\bibitem{Ei80} D. Eisenbud. Homological algebra on a complete intersection, with an application to group representations. {\it Trans. Amer. Math. Soc.} {\bf 260} (1980), 35-64.
\bibitem{Gu74} T. H. Gulliksen. A change of ring theorem with applications to Poincar\'{e} series and intersection multiplicity. {\it  Math. Scand.} {\bf 34} (1974), 167-183.
\bibitem{Ka04} D. Katz and E. West. A linear function associated to asymptotic prime divisors. {\it Proc. Amer. Math. Soc.} {\bf 132} (2004), 1589-1597.
\bibitem{Me93} L. Melkersson and P. Schenzel. Asymptotic prime ideals related to derived functors. {\it Proc. Amer. Math. Soc.} {\bf 117} (1993), 935-938.
\bibitem{Pu13} T. J. Puthenpurakal. On the finite generation of a family of Ext modules. {\it Pacific J. Math.} {\bf 266} (2013), 367-389.
\bibitem{Va98} W. V. Vasconcelos. Cohomological degrees of graded modules. pp. 345-392 in Six lectures on commutative algebra (Bellaterra, 1996), edited by J. Elias et al., Progr. Math., {\bf 166}, Birkh\"{a}user, Basel, 1998.
\bibitem{We04} E. West. Primes associated to multigraded modules. {\it J. Algebra} {\bf 271} (2004), 427-453.
\end{thebibliography}
\end{document}